\def\squarebox#1{\hbox to #1{\hfill\vbox to #1{\vfill}}}
\theoremstyle{plain}
\newtheorem{Thm}{Theorem}
\newtheorem{lem}{Lemma}
\newcommand{\bel}{\begin{equation} \label}
\newcommand{\ee}{\end{equation}}
\newcommand{\re}{\mathfrak R}
\newcommand{\im}{\mathfrak I}
\newcommand{\R}{\mathbb{R}}
\def\epsilon{\varepsilon}
\def\phi {\varphi}
\newtheorem{rem}{Remark}
\newtheorem{prop}{Proposition}
\providecommand{\abs}[1]{\left\lvert#1\right\rvert}
\providecommand{\norm}[1]{\left\lVert#1\right\rVert}
\numberwithin{equation}{section}
\renewcommand{\leq}{\leqslant}
\renewcommand{\geq}{\geqslant}
\providecommand{\abs}[1]{\left\lvert#1\right\rvert}
\providecommand{\norm}[1]{\left\lVert#1\right\rVert}
\def\beq{\begin{equation}}
\def\eeq{\end{equation}}
\newcommand{\bea}{\begin{eqnarray}}
\newcommand{\eea}{\end{eqnarray}}
\newcommand{\beas}{\begin{eqnarray*}}
\newcommand{\eeas}{\end{eqnarray*}}
\begin{document}

\title[Determination of a time-dependent coefficient for  wave equations]{Unique determination of a time-dependent potential for  wave equations from partial   data}

\author[Yavar Kian]{Yavar Kian}
\maketitle
\begin{center} \footnotesize{ CPT, UMR CNRS 7332,\\ Aix Marseille Universit\'e,\\ 13288 Marseille, France,\\ and Universit\'e de Toulon,\\ 83957 La Garde, France\\ yavar.kian@univ-amu.fr}\end{center}

\begin{abstract}
We consider   the inverse problem of  determining   a time-dependent potential $q$, appearing in   the wave equation $\partial_t^2u-\Delta_x u+q(t,x)u=0$ in $Q=(0,T)\times\Omega$ with $\Omega$  a  $C^2$ bounded domain of $\R^n$, $n\geq2$, from  partial observations of the solutions on $\partial Q$.  We prove global unique determination  of a coefficient $q\in L^\infty(Q)$. \\

\medskip
\noindent
{\bf  Keywords:} Inverse problems, wave equation, scalar time-dependent potential, Carleman estimates, partial data.\\

\medskip
\noindent
{\bf Mathematics subject classification 2010 :} 35R30, 	35L05.
\end{abstract}

\section{Introduction}
\label{sec-intro}
\setcounter{equation}{0}
\subsection{Statement of the problem }
We fix $\Omega$ a $\mathcal C^2$ bounded domain  of $\R^n$, $n\geq2$, and we set  $\Sigma=(0,T)\times\partial\Omega$, $Q=(0,T)\times\Omega$ with $0<T<\infty$.  We consider the wave equation \begin{equation}\label{wave}\partial_t^2u-\Delta_x u+q(t,x)u=0,\quad (t,x)\in Q,\end{equation}
where the  potential $q\in L^\infty(Q)$ is assumed to be real valued.  We study the inverse problem of determining $q$ from observations  of  solutions of \eqref{wave} on $\partial Q$. 

It is well known that for $T>\textrm{Diam}(\Omega)$ the data
\begin{equation}\label{data1} \mathcal A_q=\{(u_{|\Sigma},\partial_\nu u_{|\Sigma}):\ u\in L^2(Q),\ \Box u+qu=0,\ u_{|t=0}=\partial_tu_{|t=0}=0\}\end{equation}
determines uniquely a time-independent potential $q$ (e.g. \cite{RS1}). Here $\nu$ denotes the outward unit normal vector to $\Omega$ and from now on $\Box$ denotes the differential operator $\partial_t^2-\Delta_x$. It has been even proved that  partial knowledge of $\mathcal A_q$ determines  a time-independent potential $q$ (e.g. \cite{E1}). In contrast to time-independent potentials, we can not recover  the restriction of a general time dependent potential $q$  on the set $$D=\{(t,x)\in Q:\  0<t<\textrm{Diam}(\Omega)/2,\  \textrm{dist}(x,\partial\Omega)< t\}$$ from the data $\mathcal A_q$. Indeed, assume that $\Omega=\{x\in\R^n:\ |x|<R\}$, $T>R>0$. Now let $u$ solve
\[\Box u=0,\ u_{|\Sigma}=f,\quad u_{|t=0}=\partial_tu_{|t=0}=0.\]
 with $f\in H^1(\Sigma)$ satisfying $f_{|t=0}=0$. Since $u_{|t=0}=\partial_tu_{|t=0}=0$,  the finite speed of propagation implies that $u_{|D}=0$. Therefore, for any $q\in\mathcal C^\infty_0(D)$,  we have $qu=0$ and $u$ solves
\[\Box u+qu=0,\ u_{|\Sigma}=f,\quad u_{|t=0}=\partial_tu_{|t=0}=0.\]
This last result implies that for any $q\in\mathcal C^\infty_0(D)$ we have $\mathcal A_q=\mathcal A_0$ where $\mathcal A_0$ stands for $\mathcal A_q$ when $q=0$. 

 Facing this obstruction to uniqueness, it appears that  four  different approaches have been considered so far to solve this problem:\\
1) Considering the equation \eqref{wave} for any time $t\in\R$ instead of $0<t<T$ (e.g. \cite{RS}, \cite{S}).\\
2) Recovering  the restriction on a subset of $Q$ of a time-dependent potential $q$  from the data $\mathcal A_q$ (e.g. \cite{RR}).\\
3) Recovering  a time-dependent potential $q$ from the extended data $C_q$ (e.g. \cite{I}) given by
\[C_q=\{(u_{\vert\Sigma},u_{\vert t=0}, \partial_tu_{\vert t=0},\partial_\nu u_{\vert\Sigma},u_{\vert t=T},\partial_tu_{\vert t=T}):\  u\in L^2(Q),\ (\partial_t^2-\Delta_x +q)u=0\}.\]
4) Recovering  time-dependent coefficients that are analytic with respect to the $t$ variable (e.g. \cite{E2}).\\

Therefore, it seems that the only results of unique global determination of a time-dependent potential $q$ proved so far (at finite time) involve strong smoothness assumptions such as analyticity with respect to the $t$ variable or the important set of data $C_q$. In the present paper we investigate some general conditions that guaranty unique determination  of general time-dependent potentials without involving an important set of data.  More precisely, our goal is to prove unique global determination of a general time-dependent potential $q$ from partial knowledge of  the   set of data $C_q$.  
\subsection{Physical and mathematical interest }

 Physically speaking, our inverse problem can be stated as the determination of physical properties such as the time evolving density of an inhomogeneous medium by probing it with disturbances generated on some parts  of the boundary and at initial time. The data is the response of the medium to these disturbances, measured on some parts of the boundary and at the end of the experiment, and the purpose is to recover the function $q$ which measures the property of the medium. Note also that the determination of time dependent potentials can be associated to models where it is necessary to take into account the evolution in time of the perturbation.

We also precise that the determination of time-dependent potentials can  be an important tool  for the more difficult problem of determining a non-linear term appearing in a nonlinear wave equation from observations of the solutions in $\partial Q$. Indeed, in \cite{I2} Isakov applied such results for the determination of a semilinear term appearing in a semilinear parabolic equation from observations of the solutions in $\partial Q$.
\subsection{Existing papers }

In recent years the determination of  coefficients for hyperbolic equations  from boundary measurements  has  been growing in interest.  Many authors have considered this problem with an observation given by the set $\mathcal A_q$ (see \eqref{data1}). In \cite{RS1}, Rakesh and  Symes proved that  $\mathcal A_q$ determines uniquely a time-independent  potential $q$  and   \cite{I1} proved unique determination of a potential and a damping coefficient.  The uniqueness by partial boundary observations has been considered in \cite{E1}. For sake of completeness we also mention that the  stability issue related to this problem has been treated by \cite{BJY,IS,Ki,Mo,SU,SU2}. Note that \cite{Ki} extended the results of \cite{RS1} to  time-independent coefficients of order zero in an unbounded cylindrical domain. It has been proved that  measurements on a bounded subset determine some classes of coefficients including periodic coefficients and compactly supported coefficients.

All the above mentioned results are concerned  with time-independent coefficients. Several authors considered the problem of determining time-dependent coefficients for hyperbolic equations. In \cite{St}, Stefanov proved unique determination of a time-dependent potential for the wave equation  from the knowledge of scattering data which is equivalent to the problem with boundary measurements.   In \cite{RS}, Ramm and  Sj\"ostrand considered the determination of a time-dependent potential $q$ from the data $(u_{|\R\times\partial\Omega}, \partial_\nu u_{|\R\times\partial\Omega})$ of forward solutions of \eqref{wave} on the infinite time-space cylindrical domain $\R_t\times\Omega$  instead of $Q$ ($t\in\R$ instead of $0<t<T<\infty$).   Rakesh and  Ramm \cite{RR} considered the  problem at finite time on $Q$, with $T>\textrm{Diam} (\Omega)$, and they determined uniquely $q$ restricted to some  subset of $Q$ from $\mathcal A_q$.  Isakov established in \cite[Theorem 4.2]{I} unique determination of  general time-dependent potentials on the whole domain $Q$ from the extended data $C_q$.  Applying  a result of unique continuation borrowed from \cite{T}, Eskin \cite{E2} proved that the data $\mathcal A_q$  determines time-dependent coefficients  analytic with respect to the time variable $t$. Salazar \cite{S} extended the result of \cite{RS} to more general coefficients. Finally, \cite{W} stated stability in the recovery of  X-ray transforms of time-dependent potentials on a  manifold and \cite{A} proved log-type stability in the determination of time-dependent potentials from the data considered by \cite{RR} and \cite{I}.

We also mention that \cite{Ch,CK,CKS,GK} examined the determination of time-dependent coefficients for parabolic and Schr\"odinger equations and  proved stability estimate for these problems.

\subsection{Main result}
In order to state our main result, we  first introduce some intermediate tools and notations. 
For all $\omega\in\mathbb S^{n-1}=\{y\in\R^n:\ \abs{y}=1\}$ we introduce the $\omega$-shadowed and $\omega$-illuminated faces
\[\partial\Omega_{+,\omega}=\{x\in\partial\Omega:\ \nu(x)\cdot\omega\geq0\},\quad \partial\Omega_{-,\omega}=\{x\in\partial\Omega:\ \nu(x)\cdot\omega\leq0\}\]
of $\partial\Omega$. Here, for all $k\in\mathbb N^*$, $\cdot$ denotes the scalar product in $\R^k$ defined by
\[ x\cdot y=x_1y_1+\ldots +x_ky_k,\quad x=(x_1,\ldots,x_k)\in \R^k,\ y=(y_1,\ldots,y_k)\in \R^k.\]
We associate to $\partial\Omega_{\pm,\omega}$ the part of the lateral boundary $\Sigma$ given by $\Sigma_{\pm,\omega}=(0,T)\times\partial\Omega_{\pm,\omega}$. From now on we fix $\omega_0\in \mathbb S^{n-1}$ and we consider $F=(0,T)\times F'$ (resp $G=(0,T)\times G'$) with $F'$ (resp $G'$) an open  neighborhood of $\partial\Omega_{+,\omega_0}$ (resp $\partial\Omega_{-,\omega_0}$) in $\partial\Omega$.

 The main purpose of this paper is to prove  the unique global  determination of a time-dependent and real valued  potential $q\in L^\infty(Q)$ from the data
\[C_q^*=\{(u_{|\Sigma},\partial_t u_{|t=0},\partial_\nu u_{|G}, u_{|t=T}):\ u\in L^2(Q),\ \Box u+qu=0,\ u_{|t=0}=0,\ \textrm{supp}u_{|\Sigma}\subset F\}.\]
See also Section 2 for a rigorous definition of this set. Our main result can be  stated as follows.

\begin{Thm}\label{thm1} 
Let $q_1,\ q_2 \in L^\infty(Q)$ . Assume that 
\begin{equation}\label{thm1a} C_{q_1}^*=C_{q_2}^*.\end{equation}
Then $q_1=q_2$.
\end{Thm}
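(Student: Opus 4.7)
We rely on the now standard combination of an orthogonality identity, geometric optics (GO) solutions concentrated along light rays, and Carleman estimates designed to handle the partial data. Pick any $u_1$ with $\Box u_1+q_1u_1=0$, $u_1|_{t=0}=0$ and $\mathrm{supp}(u_1|_{\Sigma})\subset F$. By hypothesis $C_{q_1}^{*}=C_{q_2}^{*}$ there is a matching $u_2$, with the analogous properties for $q_2$, producing the same $4$-tuple $(u|_{\Sigma},\partial_t u|_{t=0},\partial_\nu u|_{G},u|_{t=T})$. The difference $u:=u_2-u_1$ satisfies
\[
\Box u+q_2 u=(q_1-q_2)u_1,\quad u|_{t=0}=\partial_t u|_{t=0}=0,\quad u|_{t=T}=0,\quad u|_{\Sigma}=0,\quad \partial_\nu u|_{G}=0.
\]
Testing against any $v_2$ obeying $\Box v_2+q_2 v_2=0$, $v_2|_{t=T}=0$ and $v_2|_{\Sigma\setminus G}=0$, one checks by integration by parts in $Q$ that every boundary contribution vanishes, which yields the orthogonality identity
\beq\label{plan:id}
\int_{Q}(q_1-q_2)\,u_1\,v_2\,\d t\,\d x=0.
\eeq

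\textbf{Construction of geometric optics solutions.} For each unit vector $\omega$ in a small neighborhood $\mathcal{V}\subset\mathbb{S}^{n-1}$ of $\omega_0$ and each large $\sigma>0$, we seek
\[
u_1(t,x)=e^{i\sigma(t-x\cdot\omega)}\,b_1(x-t\omega)+r_1(t,x),\qquad v_2(t,x)=e^{-i\sigma(t-x\cdot\omega)}\,b_2(x-t\omega)+r_2(t,x),
\]
solving $\Box u_1+q_1 u_1=0$ and $\Box v_2+q_2 v_2=0$ with the required vanishing traces. The phase $t-x\cdot\omega$ being characteristic for $\Box$, the amplitudes $b_j(x-t\omega)$ automatically satisfy the transport equation $(\partial_t+\omega\cdot\nabla_x)b_j=0$ and can be chosen freely as smooth compactly supported functions on $\R^n$, picked so as to kill the unwanted leading-order traces of $u_1$ at $t=0$ and of $v_2$ at $t=T$. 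The remainders $r_j$ are then obtained through a Carleman estimate for $\Box+q_j$ with a weight linear in $\pm(t-x\cdot\omega)$, modified near $\partial\Omega$ so that the non-observed pieces of $\Sigma$ become strictly pseudoconvex; a Hahn-Banach / duality argument converts this inequality into the existence of $r_j$ satisfying the required vanishing on $\Sigma\setminus F$ resp. $\Sigma\setminus G$ and the decay $\|r_j\|_{L^2(Q)}=O(\sigma^{-1})$ as $\sigma\to+\infty$.

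\textbf{Light ray transform and conclusion.} Plugging the GO solutions into \eqref{plan:id}, the oscillating phases cancel in the product $u_1 v_2$, and letting $\sigma\to+\infty$, performing the change of variables $(t,x)\mapsto(s,y)=(t,x-t\omega)$, and exploiting the free choice of $b_1,b_2$ yields the vanishing of the light ray transform
\[
\int_{\R}(q_1-q_2)(s,y+s\omega)\,\d s=0,\qquad y\in\R^n,\ \omega\in\mathcal{V},
\]
with $q_1-q_2$ extended by zero outside $Q$. Since $q_1-q_2\in L^\infty$ has compact support, its space-time Fourier transform is entire on $\C^{1+n}$ by Paley-Wiener; combined with the Fourier slice identity $\widehat{L(q_1-q_2)}(\xi,\omega)=\widehat{(q_1-q_2)}(-\omega\cdot\xi,\xi)$, this forces the entire function $\widehat{q_1-q_2}$ to vanish on the open subset $\{(-\omega\cdot\xi,\xi):\xi\in\R^n,\ \omega\in\mathcal{V}\}\subset\R^{1+n}$, hence identically, so that $q_1=q_2$.

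\textbf{Main obstacle.} The delicate step is the geometric optics construction: the standard GO scheme offers no control over the boundary traces of the remainders, whereas here we need $r_j$ whose traces vanish on the \emph{non-observed} pieces $\Sigma\setminus F$ and $\Sigma\setminus G$ (and at $t=0$ or $t=T$) while remaining of size $O(\sigma^{-1})$ in $L^2(Q)$. This forces a Carleman estimate whose weight is tailored to the direction $\omega_0$, strictly decreasing from $\partial\Omega_{+,\omega_0}$ towards $\partial\Omega_{-,\omega_0}$ for $u_1$ (and the reverse for $v_2$), so as to absorb the trace terms on the non-observed parts; combining this real exponential weight with the oscillating GO phase $e^{\pm i\sigma(t-x\cdot\omega)}$ without spoiling either the support properties of the remainder or its $\sigma^{-1}$ smallness is the main technical difficulty of the paper.
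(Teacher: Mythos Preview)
Your outline diverges from the paper's argument in two structural ways, and the second one contains a genuine gap.

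\emph{First difference.} The paper does not obtain an exact orthogonality identity. It builds only \emph{one} GO solution with prescribed boundary behaviour, namely $u_2=e^{\lambda(t+\omega\cdot x)}(1+z)$ for $q_2$, with $u_2|_{t=0}=0$ and $\mathrm{supp}\,\tau_{0,1}u_2\subset F$ (this is Theorem~\ref{tt1}, proved by Hahn--Banach from the Carleman estimate \eqref{c1b}). The companion solution $u_1=e^{-\lambda(t+\omega\cdot x)}(e^{-i\xi\cdot(t,x)}+w)$ for $q_1$ is constructed \emph{freely} in $H^1(Q)$ via H\"ormander--type estimates (Proposition~\ref{p2}), with no boundary constraint at all. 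Because $u_1$ is unconstrained, the integration by parts leaves nonzero boundary contributions on $\Sigma\setminus G$ and at $t=T$; these are then absorbed by a \emph{second} use of the Carleman estimate, namely \eqref{c1a} applied to $u=w_1-u_2$. After letting $\lambda\to\infty$ one gets $\widehat q(\xi)=0$ directly on the hyperplanes $\xi\cdot(1,-\omega)=0$, and analyticity of $\widehat q$ concludes. There is no light--ray transform step.

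\emph{Second difference, and the gap.} Your scheme replaces this by two boundary--constrained oscillating GO solutions $e^{\pm i\sigma(t-x\cdot\omega)}b_j(x-t\omega)+r_j$, so that all boundary terms drop out. The problem is the claimed decay $\|r_j\|_{L^2(Q)}=O(\sigma^{-1})$. The Hahn--Banach construction driven by a Carleman estimate with a real exponential weight $e^{\mu\psi}$ produces a remainder that is small only in the \emph{weighted} space $e^{\mu\psi}L^2(Q)$, with smallness governed by the Carleman parameter $\mu$, not by the oscillation parameter $\sigma$; the right--hand side $-q_j\,e^{i\sigma(t-x\cdot\omega)}b_j$ has unweighted $L^2$ norm of order one, independent of $\sigma$. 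In the paper this is harmless because the GO phase \emph{is} the Carleman weight (both are $e^{\pm\lambda(t+\omega\cdot x)}$), and the two exponentials cancel in the product $u_1u_2$. In your ansatz the principal parts are purely oscillatory, so there is nothing to cancel the exponential weight carried by $r_j$, and the cross terms in $u_1v_2$ are not $o(1)$ in $L^2(Q)$. Your ``main obstacle'' paragraph correctly identifies this incompatibility but does not resolve it; the paper's resolution is precisely to abandon the oscillating ansatz for the boundary--constrained solution. A secondary issue is that forcing the leading amplitudes $b_1,b_2$ to vanish at $t=0$ and $t=T$ restricts $\mathrm{supp}\,b_j$ to the complement of $\overline{\Omega}$ and of $\overline{\Omega}-T\omega$, so you would only recover the light--ray transform for rays that begin and end outside $\Omega$; since Theorem~\ref{thm1} carries no lower bound on $T$, this is not enough to cover $Q$ in general.
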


Note that our uniqueness result is stated for bounded potentials with, roughly speaking, half of the data $C_q$ considered in \cite[Theorem 4.2]{I} which seems to be, with \cite{A}, the only result of unique global determination of general time-dependent coefficients for the wave equation, at finite time, in the mathematical literature. More precisely, we consider  $u\in L^2(Q)$ solutions of $(\partial_t^2-\Delta +q)u=0$, on $Q$, with initial condition  $u_{\vert t=0}=0$ and  Dirichlet boundary condition  $u_{\vert\Sigma}$ supported on $F$ (which, roughly speaking, corresponds  to half of the boundary). Moreover, we exclude the data $\partial_t u_{\vert t=T}$ and  we consider the Neumann data $\partial_\nu u$ only on $G$ (which, roughly speaking, corresponds to the other half of the boundary).
We also mention that in contrast to \cite{E2}, we do not use results of unique continuation where the analyticity of the coefficients with respect to $t$ is required.
To our best knowledge condition \eqref{thm1a} is the weakest condition that guaranties global uniqueness of general time dependent potentials. Moreover, taking into account the obstruction to uniqueness given by domain of dependence arguments (see Subsection 1.1), the restriction to solutions $u$ of \eqref{wave} satisfying $u_{|t=0}=0$ seems close to the best condition that we can expect on the initial  data for the determination of time-dependent potentials.

The main tools in our analysis are   geometric optics (GO in short) solutions and  Carleman estimates. Following an approach used for elliptic equations (e.g. \cite{BU,KSU,NS}) and for determination of time-independent potentials by \cite{BJY}, we construct two kind of GO solutions: GO solutions lying in $H^1(Q)$
without condition on  $\partial Q$ (see Section 3) and GO solutions associated to \eqref{wave} that vanish on parts of  $\partial Q$ (see Section 5). With these solutions and some Carleman estimates with linear weight (see Section 4), we prove Theorem \ref{thm1}.

\subsection{Outline}

This paper is organized as follows. In Section 2 we  give a suitable definition of the set of data $C_q^*$ and  we define the associated boundary operator. In Section 3, using some results of \cite{Ch} and \cite{Ho2}, we build suitable GO solutions associated to \eqref{wave} without conditions on  $\partial Q$. In Section 4, we establish a Carleman estimate for the wave equation with linear weight. In Section 5, we use the Carleman estimate introduced in Section 4 to   build  GO solutions associated to \eqref{wave} that vanish on parts of  $\partial Q$. More precisely, we build GO  $u$ which are solutions of \eqref{wave} with $u_{|t=0}=0$ and supp$u_{|\Sigma}\subset F$. In Section 6 we combine all the results of the previous sections in order to prove Theorem \ref{thm1}. We prove also some auxiliary results in the appendix.
\vspace{5mm}
\ \\
\textbf{Acknowledgements}. The author would like to thank Mourad Bellassoued, Mourad Choulli and Eric Soccorsi   for their   remarks and  suggestions.
\section{Preliminary results}
The goal of this section is to give a suitable definition to the set of data $C_q^*$ and to introduce some properties of the solutions of \eqref{wave} for any $q\in L^\infty(Q)$ real valued. We first introduce  the  space
\[J=\{u\in L^2(Q):\ (\partial_t^2-\Delta_x)u=0\}\]
and topologize it as a closed subset of $L^2(Q)$.  We work with the space
\[H_{\Box}(Q)=\{u\in L^2(Q):\ \Box u=(\partial_t^2-\Delta_x) u\in L^2(Q)\},\]
 with the norm
\[\norm{u}^2_{H_{\Box}(Q)}=\norm{u}_{L^2(Q)}^2+\norm{(\partial_t^2-\Delta_x) u}_{L^2(Q)}^2.\]
Repeating some arguments of  \cite[Theorem 6.4, Chapter 2]{LM1} we prove in the appendix (see Theorem \ref{density}) that $H_\Box(Q)$ embedded continuously into the closure of $\mathcal C^\infty(\overline{Q})$ in the space
\[K_\Box(Q)=\{u\in H^{-1}(0,T;L^2(\Omega)):\ \Box u=(\partial_t^2-\Delta_x) u\in L^2(Q)\}\]
topologized by the norm
\[\norm{u}^2_{K_{\Box}(Q)}=\norm{u}_{H^{-1}(0,T;L^2(\Omega))}^2+\norm{(\partial_t^2-\Delta_x) u}_{L^2(Q)}^2.\]
 Then, following   \cite[Theorem 6.5, Chapter 2]{LM1}, we prove in the appendix that the maps
\[\tau_0w=(w_{\vert\Sigma},w_{\vert t=0},\partial_t w_{\vert t=0}) ,\quad \tau_1w=(\partial_\nu w_{\vert\Sigma},w_{\vert t=T},\partial_t w_{\vert t=T}), \quad w\in \mathcal C^\infty(\overline{Q}),\]
can be extended continuously to $\tau_0:H_{\Box}(Q)\rightarrow H^{-3}(0,T; H^{-\frac{1}{2}}(\partial\Omega))\times H^{-2}(\Omega)\times H^{-4}(\Omega)$, $\tau_1:H_{\Box}(Q)\rightarrow H^{-3}(0,T; H^{-\frac{3}{2}}(\partial\Omega))\times H^{-2}(\Omega)\times H^{-4}(\Omega)$ (see Proposition \ref{trace}). Here for all $ w\in \mathcal C^\infty(\overline{Q})$ we set
\[\tau_0w=(\tau_{0,1}w,\tau_{0,2}w,\tau_{0,3}w),\quad \tau_1w=(\tau_{1,1}w,\tau_{1,2}w,\tau_{1,3}w),\]
where
\[ \tau_{0,1}w=w_{\vert\Sigma},\    \tau_{0,2}w=w_{\vert t=0},\ \tau_{0,3}w=\partial_tw_{\vert t=0},\  \tau_{1,1}w=\partial_\nu w_{\vert\Sigma},\ \tau_{1,2}w=w_{\vert t=T},\ \tau_{1,3}w=\partial_tw_{\vert t=T}.\]
Therefore, we can introduce
\[\mathcal H(\partial Q)=\{\tau_0u:\ u\in H_{\Box}(Q)\}\subset  H^{-3}(0,T; H^{-\frac{1}{2}}(\partial\Omega))\times H^{-2}(\Omega)\times H^{-4}(\Omega).\]
Following \cite{BU} and \cite{NS}, in order to define an appropriate topology on $\mathcal H(\partial Q)$ we consider the restriction of $\tau_0$ to the space $J$.
\begin{prop}\label{p5} The restriction of $\tau_0$ to $J$ is one to one and onto.
\end{prop}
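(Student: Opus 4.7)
My plan is to prove injectivity by a duality/transposition argument against a well-posed backward initial-boundary value problem, and surjectivity by subtracting off a forward solution with zero trace data.

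For injectivity, suppose $u \in J$ with $\tau_0 u = 0$. To show $u=0$ it suffices to show $\langle u,\psi\rangle_{L^2(Q)}=0$ for every $\psi \in L^2(Q)$. Given such $\psi$, I would solve the backward initial-boundary value problem
\[\Box\varphi=\psi \text{ in }Q,\qquad \varphi_{|\Sigma}=0,\qquad \varphi_{|t=T}=0,\qquad \partial_t\varphi_{|t=T}=0,\]
which is well-posed and yields $\varphi\in H^1(Q)\cap H_\Box(Q)$ with $\tau_1\varphi$ well-defined in the appropriate trace spaces. By the density result (Theorem \ref{density}) stated in the excerpt, $u$ can be approximated in $K_\Box(Q)$ by $\mathcal C^\infty(\overline Q)$ functions, and the approximation allows me to extend the classical Green's formula
\[\int_Q u\,\Box\varphi\,\d t\,\d x-\int_Q (\Box u)\,\varphi\,\d t\,\d x=\langle \tau_1\varphi,\tau_0 u\rangle - \langle \tau_0\varphi,\tau_1 u\rangle\]
to pairs $(u,\varphi)\in H_\Box(Q)\times(H^1(Q)\cap H_\Box(Q))$ with a suitable duality pairing between the trace spaces. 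Since $\Box u=0$, $\tau_0 u=0$ and $\tau_0\varphi=0$, the right-hand side vanishes and $\int_Q u\psi=0$. As $\psi$ was arbitrary, $u=0$.

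For surjectivity, let $(f,v_0,v_1)\in\mathcal H(\partial Q)$. By definition there exists $w\in H_\Box(Q)$ with $\tau_0 w=(f,v_0,v_1)$. Set $F=\Box w\in L^2(Q)$. I would then invoke the standard well-posedness of the forward initial-boundary value problem for the wave equation to obtain $v\in H^1(Q)\subset L^2(Q)$ solving
\[\Box v=F \text{ in }Q,\qquad v_{|\Sigma}=0,\qquad v_{|t=0}=0,\qquad \partial_t v_{|t=0}=0,\]
so that $\tau_0 v=0$ in the trace space. Defining $u=w-v$, one has $u\in L^2(Q)$, $\Box u=\Box w-F=0$, hence $u\in J$, and $\tau_0 u=\tau_0 w=(f,v_0,v_1)$. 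This exhibits a preimage of an arbitrary element of $\mathcal H(\partial Q)$.

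The main subtlety I expect to wrestle with is the first step: making rigorous the Green's formula for elements of $H_\Box(Q)$ whose traces only live in rather negative-order Sobolev spaces $H^{-3}(0,T;H^{-1/2}(\partial\Omega))\times H^{-2}(\Omega)\times H^{-4}(\Omega)$. This is where the density theorem referenced in the excerpt does the real work: approximating $u\in H_\Box(Q)$ by smooth functions in $K_\Box(Q)$ and passing to the limit requires matching each trace component against a dual object produced by $\varphi$, and verifying that $\tau_1\varphi$ indeed lies in the topological dual of the space containing $\tau_0 u$. The surjectivity step is essentially routine once forward well-posedness is invoked.
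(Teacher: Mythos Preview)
Your surjectivity argument is essentially identical to the paper's: lift the trace by some $w\in H_\Box(Q)$, solve the forward IBVP with source $\Box w$ and zero Cauchy/Dirichlet data, and subtract.

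Your injectivity argument, however, takes a genuinely different route. The paper does not use duality or Green's formula at all; instead it recycles the surjectivity mechanism. Given $v_1,v_2\in J$ with $\tau_0 v_1=\tau_0 v_2$, the Lions--Magenes theory supplies a common lift $F\in H_\Box(Q)$ with the same trace, and then each $w_j:=v_j-F$ solves the \emph{same} forward IBVP $\Box w_j=-\Box F$ with homogeneous Cauchy and Dirichlet data; uniqueness of that IBVP gives $w_1=w_2$, hence $v_1=v_2$. This is shorter and avoids entirely the trace-regularity issue you flagged. Your transposition approach is valid in principle, but the subtlety you identify is real: to pass to the limit in Green's formula you need the traces of the backward solution $\varphi$ to pair with $\tau_0 u$ and $\tau_1 u$, and for general $\psi\in L^2(Q)$ the solution $\varphi$ only has $\partial_\nu\varphi\in L^2(\Sigma)$ and $\varphi(0,\cdot)\in H^1_0(\Omega)$, $\partial_t\varphi(0,\cdot)\in L^2(\Omega)$, which do not sit in the predual spaces $H^3_0(0,T;H^{1/2})$, $H^4_0(\Omega)$, $H^2_0(\Omega)$ used to define $\tau_0,\tau_1$ on $H_\Box(Q)$. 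You can rescue this by restricting to $\psi\in\mathcal C^\infty_0(Q)$ and exploiting higher regularity of $\varphi$, or by using the very representation formulas in the appendix that define the traces, but the paper's argument sidesteps all of this by reducing directly to IBVP uniqueness.
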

 \begin{proof} Let $v_1,v_2\in J$ with $\tau_0v_1=\tau_0v_2$. Then, in light of  the theory introduced in \cite[Section 8, Chapter 3]{LM1}, there exists $F\in H_\Box (Q)$ such that, for $j=1,2$, we have $v_j=F+w_j$ with $w_j\in\mathcal C^1([0,T]; L^2(\Omega))\cap \mathcal C([0,T]; H^1_0(\Omega))$ solving

\[\left\{ \begin{array}{rcll} \partial_t^2w_j-\Delta_x w_j& = & -\Box F, & (t,x) \in Q ,\\ 

{w_j}_{|t=0}=\partial_t {w_j}_{\vert t=0}&=&0,&\\
   {w_j}_{\vert\Sigma}& = & 0.& \end{array}\right.\]
Then, the uniqueness of solutions of this  initial boundary value problem (IBVP in short) implies that $v_1=v_2$. Thus, the restriction of $\tau_0$ to $J$ is one to one. Now let $(g,v_0,v_1)\in \mathcal H(\partial Q)$. There exists $S\in H_{\Box}(Q)$ such that $\tau_0 S=(g,v_0,v_1)$. Consider the  initial boundary value problem
\[\left\{ \begin{array}{rcll} \partial_t^2v-\Delta_x v& = & -\Box S, & (t,x) \in Q ,\\ 
v_{\vert t=0}=\partial_t v_{\vert t=0}&=&0,&\\
v_{\vert\Sigma}& = & 0.&\end{array}\right.\]
Since $-\Box S\in L^2(Q)$,  we deduce that this IBVP admits a unique solution $v\in \mathcal C^1([0,T];L^2(\Omega))\cap \mathcal C([0,T];H^1_0(\Omega))$. Then, $u=v+S\in L^2(Q)$ satisfies $(\partial_t^2-\Delta_x) u=0$ and 
$\tau_0u=\tau_0v+\tau_0S=(g,v_0,v_1)$. Thus $\tau_0$ is onto.\end{proof}
From now on, we set $\mathcal P_0$  the inverse of $\tau_0:J\rightarrow \mathcal H(\partial Q)$ and  define the norm of $\mathcal H(\partial Q)$ by
\[\norm{(g,v_0,v_1)}_{\mathcal H(\partial Q)}=\norm{\mathcal P_0(g,v_0,v_1)}_{L^2(Q)},\quad (g,v_0,v_1)\in\mathcal H(\partial Q).\]
In the same way, we introduce the space $\mathcal H_F(\partial Q)$ defined by 
\[\mathcal H_F(\partial Q)=\{(\tau_{0,1}h,\tau_{0,3}h):\ h\in H_\Box(Q),\ \tau_{0,2}h=0,\ \textrm{supp}\tau_{0,1}h\subset F\}\]
with the associated norm given by
\[\norm{(g,v_1)}_{\mathcal H_F(\partial Q)}=\norm{(g,0,v_1)}_{\mathcal H(\partial Q)},\quad (g,v_1)\in \mathcal H_F(\partial Q).\]
One can easily check that the space $\mathcal H_F(\partial Q)$ embedded continuously into $\mathcal H(\partial Q)$.
Let us consider  the IBVP
\begin{equation}\label{eq1}\left\{\begin{array}{ll}\partial_t^2u-\Delta_x u+q(t,x)u=0,\quad &\textrm{in}\ Q,\\  u(0,\cdot)=0,\quad \partial_tu(0,\cdot)=v_1,\quad &\textrm{in}\ \Omega,\\ u=g,\quad &\textrm{on}\ \Sigma.\end{array}\right.\end{equation}
We are now in position to state existence and uniqueness of solutions of this IBVP for $(g,v_1)\in \mathcal H_F(\partial Q)$.
\begin{prop}\label{p6} Let $(g,v_1)\in \mathcal H_F(\partial Q)$ and $q\in L^\infty(Q)$. Then the IBVP \eqref{eq1} admits a unique weak solution $u\in L^2(Q)$ satisfying 
\bel{p6a}
\norm{u}_{L^2(Q)}\leq C\norm{(g,v_1)}_{\mathcal H_F(\partial Q)}
\ee
and the boundary operator $B_q: (g,v_1)\mapsto (\tau_{1,1}u_{|G}, \tau_{1,2}u)$ is a bounded operator from $\mathcal H_F(\partial Q)$ to\\
 $H^{-3}(0,T; H^{-\frac{3}{2}}(G))\times H^{-2}(\Omega)$.
\end{prop}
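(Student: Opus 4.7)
The plan is to reduce \eqref{eq1} to an IBVP with vanishing initial and boundary data by subtracting off a solution of the free wave equation. Given $(g,v_1)\in\mathcal H_F(\partial Q)$, I would set $h=\mathcal P_0(g,0,v_1)$; by Proposition~\ref{p5} this $h$ belongs to $J\subset L^2(Q)$, satisfies $\Box h=0$ and $\tau_0 h=(g,0,v_1)$, and its $L^2(Q)$-norm equals $\|(g,v_1)\|_{\mathcal H_F(\partial Q)}$ by definition of that norm. Writing $u=h+v$, the problem \eqref{eq1} becomes the IBVP
\[
\partial_t^2v-\Delta_x v+qv=-qh\ \text{ in }Q,\qquad v_{|t=0}=\partial_tv_{|t=0}=0,\qquad v_{|\Sigma}=0,
\]
whose source lies in $L^2(Q)$ since $q\in L^\infty(Q)$ and $h\in L^2(Q)$.

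Classical well-posedness for the wave equation with bounded time-dependent potential (cf.~\cite{LM1}) then provides a unique $v\in\mathcal C([0,T];H^1_0(\Omega))\cap\mathcal C^1([0,T];L^2(\Omega))$ together with the energy estimate $\|v\|_{L^2(Q)}\leq C\|q\|_{L^\infty(Q)}\|h\|_{L^2(Q)}$. Combining this with $\|h\|_{L^2(Q)}=\|(g,v_1)\|_{\mathcal H_F(\partial Q)}$ yields $u=h+v\in L^2(Q)$, which solves \eqref{eq1} and fulfils \eqref{p6a}. For uniqueness I would use a transposition argument: if $w\in L^2(Q)$ satisfies $\Box w+qw=0$ with $\tau_0 w=0$, then for any $F\in L^2(Q)$ I would pick $\phi$ solving the backward IBVP $\Box\phi+q\phi=F$, $\phi_{|t=T}=\partial_t\phi_{|t=T}=0$, $\phi_{|\Sigma}=0$, approximate $w$ in $K_\Box(Q)$ by functions in $\mathcal C^\infty(\overline{Q})$ using Theorem~\ref{density}, and pass to the limit in the Green-type identity, the boundary contributions being controlled by the trace continuity of Proposition~\ref{trace}. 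The identity reduces to $\int_Q wF\,dt\,dx=0$ for every $F\in L^2(Q)$, hence $w=0$, and this in particular shows that $u$ does not depend on the chosen lift $h$.

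Finally, for the boundedness of $B_q$: since $q\in L^\infty(Q)$ and $u\in L^2(Q)$, we have $\Box u=-qu\in L^2(Q)$, so $u\in H_\Box(Q)$ with $\|u\|_{H_\Box(Q)}\leq(1+\|q\|_{L^\infty(Q)})\|u\|_{L^2(Q)}$. Proposition~\ref{trace} then yields $\tau_{1,1}u\in H^{-3}(0,T;H^{-\frac{3}{2}}(\partial\Omega))$ and $\tau_{1,2}u\in H^{-2}(\Omega)$ with norms controlled by $\|u\|_{L^2(Q)}$, and restricting $\tau_{1,1}u$ to $G$ is continuous into $H^{-3}(0,T;H^{-\frac{3}{2}}(G))$ because this restriction is defined by duality against test functions supported in $G$. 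Chaining these estimates with \eqref{p6a} produces the announced mapping property of $B_q$. The main technical obstacle throughout is the very low regularity of $u$: all traces, restrictions and integrations by parts must be interpreted in the generalized sense provided by the density and trace results of Section~2 and the appendix rather than pointwise, which is precisely the reason those preparatory results were developed first.
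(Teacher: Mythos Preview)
Your proposal is correct and follows essentially the same route as the paper: both decompose $u=\mathcal P_0(g,0,v_1)+v$, solve the homogeneous IBVP for $v$ with $L^2$ source $-q\,\mathcal P_0(g,0,v_1)$ via the standard theory from \cite{LM1}, and then obtain the mapping property of $B_q$ from $u\in H_\Box(Q)$ together with Proposition~\ref{trace}. The only notable addition on your side is the explicit transposition argument for uniqueness in the $L^2(Q)$ class, which the paper leaves implicit by simply invoking uniqueness of the auxiliary IBVP for $v$.
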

 \begin{proof} We split $u$ into two terms $u=v+\mathcal P_0(g,0,v_1)$ where $v$ solves
\bel{Eq1}\left\{ \begin{array}{rcll} \partial_t^2v-\Delta_x v+qv& = & -q\mathcal P_0(g,0,v_1), & (t,x) \in Q ,\\ 

v_{\vert t=0}=\partial_t v_{\vert t=0}&=&0,&\\
 v_{\vert\Sigma}& = & 0.& \end{array}\right.
\ee
Since $\mathcal P_0(g,0,v_1)\in L^2(Q)$, the IBVP \eqref{Eq1} admits a unique solution $v\in \mathcal C^1([0,T];L^2(\Omega))\cap \mathcal C([0,T];H^1_0(\Omega))$ (e.g. \cite[Section 8, Chapter 3]{LM1}) satisfying
\bel{p6b}
\norm{v}_{\mathcal C^1([0,T];L^2(\Omega))}+\norm{v}_{\mathcal C([0,T];H^1_0(\Omega))} \leq C\norm{-q\mathcal P_0(g,0,v_1)}_{L^2(Q)}\leq C\norm{q}_{L^\infty(Q)}\norm{\mathcal P_0(g,0,v_1)}_{L^2(Q)}.\ee
Therefore, $u=v+\mathcal P_0(g,0,v_1)$ is the unique solution of \eqref{eq1} and estimate \eqref{p6b} implies \eqref{p6a}. Now let us show the last part of the proposition. For this purpose fix $(g,v_1)\in\mathcal H_F(\partial Q)$ and consider $u$ the solution of \eqref{eq1}. Note first that $u\in L^2(Q)$ and $(\partial_t^2-\Delta_x) u=-qu\in L^2(Q)$. Thus,  $u\in H_{\Box}(Q)$ and  $\tau_{1,1}u\in H^{-3}(0,T; H^{-\frac{3}{2}}(\partial\Omega))$ $\tau_{1,2}u \in H^{-2}(\Omega)$ with
\[\norm{\tau_{1,1}u}^2+\norm{\tau_{1,2}u}^2\leq C^2\norm{u}^2_{H_{\Box}(Q)}=C^2(\norm{u}^2_{L^2(Q)}+\norm{qu}^2_{L^2(Q)})\leq C^2(1+\norm{q}^2_{L^\infty(Q)})\norm{u}_{L^2(Q)}^2.\]
Combining this with \eqref{p6a} we deduce that $B_q$ is a bounded operator from $\mathcal H_F(\partial Q)$ to $H^{-3}(0,T; H^{-\frac{3}{2}}(G))\times H^{-2}(\Omega)$.\end{proof}

From now on we consider the set $C_q^*$ to be the graph of the boundary operator $B_q$ given by
\[C_q^*=\{(g,v_1,B_q(g,v_1)):\ (g,v_1)\in \mathcal H_F(\partial Q)\}.\]

\section{Geometric optics solutions without boundary conditions}

In this section we build    geometric optics solutions   $u\in H^1(Q)$ associated to the equation 
\begin{equation}\label{eqGO1}\partial_t^2u-\Delta_xu+q(t,x)u=0\quad \textrm{on } Q.\end{equation}
 More precisely, for $\lambda>1$,  $\omega\in\mathbb S^{n-1}=\{y\in\R^n:\ |y|=1\}$ and $\xi\in\R^{1+n}$ satisfying $\xi\cdot(1,-\omega)=0$, we consider solutions of the form
\begin{equation}\label{GO1} u(t,x)=e^{-\lambda(t+x\cdot\omega)}(e^{-i\xi\cdot(t,x)}+w(t,x)),\quad (t,x)\in Q.\end{equation}
Here $w\in H^1(Q)$ fulfills
\[\norm{w}_{L^2(Q)}\leq \frac{C}{\lambda}\]
with $C>0$ independent of $\lambda$. For this purpose, for all $s\in\R$ and all $\omega\in\mathbb S^{n-1}$, we consider the operators $P_{s,\omega}$ defined by $P_{s,\omega}=e^{-s(t+x\cdot\omega)}\Box e^{s(t+x\cdot\omega)}$. One can check that
\[P_{s,\omega}=p_{s,\omega}(D_t,D_x)=\Box +2s(\partial_t-\omega\cdot \nabla_x)\]
with $D_t=-i\partial_t$, $D_x=-i\nabla_x$ and $p_{s,\omega}(\mu,\eta)=-\mu^2+\abs{\eta}^2+2si (\mu-\omega\cdot\eta)$, $\mu\in\R$, $\eta\in\R^n$. Applying some  results of \cite{Ch} and \cite{Ho2} about solutions of  PDEs with constant coefficients we obtain the following.

\begin{lem}\label{pp1} For every $\lambda>1$ and $\omega\in \mathbb S^{n-1}$ there exists a bounded operator $E_{\lambda,\omega}:\ L^2(Q)\to L^2(Q)$ such that:
\begin{equation}\label{pp1a}P_{-\lambda,\omega} E_{\lambda,\omega}f=f,\quad f\in L^2(Q),\end{equation}
\begin{equation}\label{pp1b} \norm{E_{\lambda,\omega}}_{\mathcal B(L^2(Q))}\leq C\lambda^{-1},\end{equation}
\begin{equation}\label{pp1c} E_{\lambda,\omega}\in \mathcal B(L^2(Q);H^1(Q))\quad \textrm{and}\quad \norm{E_{\lambda,\omega}}_{\mathcal B(L^2(Q);H^1(Q))}\leq C\end{equation}
with $C>$ depending only on $T$ and $\Omega$.\end{lem}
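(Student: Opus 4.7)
The plan is to exploit the fact that $P_{-\lambda,\omega}$ is a differential operator with \emph{constant} coefficients on $\R^{1+n}$, so existence of a (right) inverse in $L^2$ on a bounded set is a classical question to which the cited results of Hörmander \cite{Ho2} and Choulli \cite{Ch} apply directly. The symbol
\[
p_{-\lambda,\omega}(\mu,\eta)=|\eta|^2-\mu^2-2i\lambda\,(\mu-\omega\cdot\eta)
\]
splits into a real part equal to the d'Alembertian symbol and an imaginary part whose modulus is linear in the large parameter $\lambda$. It is precisely this $\lambda$-linear imaginary part that should provide the $\lambda^{-1}$ gain in \eqref{pp1b}.

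First I would reduce to a full-space problem by fixing a bounded open set $\widetilde \Omega\subset\R^{1+n}$ with $\overline{Q}\subset \widetilde\Omega$ and extending $f\in L^2(Q)$ by zero to $\widetilde f\in L^2(\widetilde\Omega)$. Applying the general Hörmander–Malgrange–Ehrenpreis framework for constant coefficient operators yields a fundamental solution $G_\lambda$ of $P_{-\lambda,\omega}$ on $\R^{1+n}$ whose convolution operator maps $L^2(\widetilde\Omega)$ into $L^2(\widetilde\Omega)$; restriction to $Q$ defines $E_{\lambda,\omega}$ and gives \eqref{pp1a}. The quantitative estimate \eqref{pp1b} is obtained, following the strategy used by Choulli \cite{Ch} for the analogous conjugated parabolic operator, by tracking the $\lambda$-dependence through the construction: roughly, on the region where $|\mu-\omega\cdot\eta|\gtrsim 1$ one exploits $|p_{-\lambda,\omega}|\ge 2\lambda|\mu-\omega\cdot\eta|$, while near the characteristic locus one uses a complex contour shift in the $\mu$ variable or a Paley–Wiener argument to avoid the vanishing of the symbol. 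Combining these pieces via Plancherel produces the $L^2$ bound with constant $C/\lambda$.

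For the $H^1$ estimate \eqref{pp1c}, I would use the equation itself rather than a direct symbolic argument. Setting $u=E_{\lambda,\omega}f$, we have $\Box u - 2\lambda(\partial_t-\omega\cdot\nabla_x) u = f$ in $Q$. A multiplier/energy argument (multiplying by $\overline{u}$ and integrating, or by differentiated copies of $u$ and using the $L^2$ bound from \eqref{pp1b} to absorb the large first-order term against $\|f\|_{L^2}\,\|u\|_{L^2}\lesssim \lambda^{-1}\|f\|_{L^2}^2$) yields control of $\|\nabla_{t,x} u\|_{L^2(Q)}$ by $\|f\|_{L^2(Q)}$ with a constant independent of $\lambda$. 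The cancellation is exactly that of the free parameter $\lambda$ times the streaming direction $(1,-\omega)$, which is null for $\Box$.

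The main obstacle is that $p_{-\lambda,\omega}$ vanishes on the one-dimensional subspace $\{(c,c\omega):c\in\R\}\subset\R^{1+n}$, so the parametrix cannot be obtained by plain Fourier division; the quantitative $\lambda^{-1}$ decay must be extracted through the Hörmander-type construction of a fundamental solution (with either a weighted division or an explicit contour deformation) together with the domain being bounded, which is what the combined input from \cite{Ho2} and \cite{Ch} is designed to provide.
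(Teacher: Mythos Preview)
Your plan for \eqref{pp1a}--\eqref{pp1b} is in the right spirit but unnecessarily vague. The paper's argument is shorter and fully explicit: it invokes the H\"ormander weight function
\[
\tilde p_{-\lambda,\omega}(\mu,\eta)=\Bigl(\sum_{k,\alpha}\bigl|\partial_\mu^k\partial_\eta^\alpha p_{-\lambda,\omega}(\mu,\eta)\bigr|^2\Bigr)^{1/2},
\]
together with the standard estimate (from \cite[Theorem 10.3.7]{Ho2}, restated in \cite[Theorem 2.3]{Ch}) $\|Q(D)E_{\lambda,\omega}\|_{\mathcal B(L^2(Q))}\le C\sup|Q|/\tilde p_{-\lambda,\omega}$. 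Since $\tilde p_{-\lambda,\omega}(\mu,\eta)\ge |\im\partial_\mu p_{-\lambda,\omega}|=2\lambda$ \emph{uniformly} in $(\mu,\eta)$, the $\lambda^{-1}$ bound \eqref{pp1b} falls out immediately; no contour shift, no Paley--Wiener argument, and no case splitting near the characteristic line is needed. The vanishing of $p_{-\lambda,\omega}$ on $\{(c,c\omega)\}$ that you worry about is precisely what the passage from $p$ to $\tilde p$ is designed to handle.

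Your proposed route to \eqref{pp1c} via an energy/multiplier argument, on the other hand, has a real gap. The function $u=E_{\lambda,\omega}f$ carries \emph{no} boundary or initial conditions on $\partial Q$: it is the restriction to $Q$ of a convolution with a fundamental solution, and you have no control of $u$, $\partial_t u$, or $\partial_\nu u$ on $\partial Q$. Any integration by parts on $Q$ therefore produces boundary terms you cannot absorb. Moreover, even formally, multiplying $\Box u-2\lambda(\partial_t-\omega\cdot\nabla_x)u=f$ by $\overline{u}$ yields $-\|\partial_t u\|^2+\|\nabla_x u\|^2$ from the $\Box$ part (indefinite sign) and a pure divergence from the $\lambda$ part, so this does not control $\|\nabla_{t,x}u\|_{L^2}$. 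The paper avoids all of this by staying at the symbol level: from $\tilde p_{-\lambda,\omega}(\mu,\eta)\ge |\re\partial_\mu p_{-\lambda,\omega}|=2|\mu|$ and $\tilde p_{-\lambda,\omega}(\mu,\eta)\ge |\re\partial_{\eta_i}p_{-\lambda,\omega}|=2|\eta_i|$ one gets $\|D_tE_{\lambda,\omega}\|_{\mathcal B(L^2)}+\sum_i\|D_{x_i}E_{\lambda,\omega}\|_{\mathcal B(L^2)}\le C$ uniformly in $\lambda$, which is exactly \eqref{pp1c}. I recommend you replace the energy argument by this direct symbolic bound.
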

\begin{proof} In light of \cite[Thorem 2.3]{Ch} (see also \cite[Thorem 10.3.7]{Ho2}), there exists a bounded operator $E_{\lambda,\omega}:\ L^2(Q)\to L^2(Q)$, defined from a fundamental solution associated to $P_{-\lambda,\omega}$ (see Section 10.3 of \cite{Ho2}),  such that \eqref{pp1a} is fulfilled. In addition, for all differential operator  $Q(D_t,D_x)$ with ${Q(\mu,\eta)\over \tilde {p}_{-\lambda,\omega}(\mu,\eta)}$ a bounded function, we have $Q(D_t,D_x)E_{\lambda,\omega}\in\mathcal B(L^2(Q))$ and there exists  a constant $C$ depending only on $\Omega$, $T$ such that
\begin{equation}\label{pp1d}\norm{Q(D_t,D_x)E_{\lambda,\omega}}_{\mathcal B(L^2(Q))}\leq C\sup_{(\mu,\eta)\in\R^{1+n}}{|Q(\mu,\eta)|\over \tilde {p}_{-\lambda,\omega}(\mu,\eta)}\end{equation}
with  $\tilde {p}_{-\lambda,\omega}$  given by
\[\tilde{p}_{-\lambda,\omega}(\mu,\eta)=\left(\sum_{k\in\mathbb N}\sum_{\alpha\in\mathbb N^n}|\partial^k_\mu\partial^\alpha_\eta p_{-\lambda,\omega}(\mu,\eta)|^2\right)^{{1\over2}},\quad \mu\in\R,\ \eta\in\R^n.\]
Note that $\tilde{p}_{-\lambda,\omega}(\mu,\eta)\geq \abs{\im \partial_\mu p_{-\lambda,\omega}(\mu,\eta)}=2\lambda$. Therefore, \eqref{pp1d} implies
\[\norm{E_{\lambda,\omega}}_{\mathcal B(L^2(Q))}\leq C\sup_{(\mu,\eta)\in\R^{1+n}}{1\over \tilde {p}_{-\lambda,\omega}(\mu,\eta)}\leq C\lambda^{-1}\]
and \eqref{pp1b} is fulfilled. In a same way, we have $\tilde{p}_{-\lambda,\omega}(\mu,\eta)\geq \abs{\re \partial_\mu p_{-\lambda,\omega}(\mu,\eta)}=2|\mu|$ and $\tilde{p}_{-\lambda,\omega}(\mu,\eta)\geq \abs{\re \partial_{\eta_i} p_{-\lambda,\omega}(\mu,\eta)}=2|\eta_i|$, $i=1,\ldots,n$ and $\eta=(\eta_1,\ldots,\eta_n)$. Therefore, in view of  \cite[Thorem 2.3]{Ch}, we have $E_{\lambda,\omega}\in \mathcal B(L^2(Q);H^1(Q))$ with
\[\norm{E_{\lambda,\omega}}_{\mathcal B(L^2(Q);H^1(Q))}\leq C\sup_{(\mu,\eta)\in\R^{1+n}}{|\mu|+|\eta_1|+\ldots+|\eta_n|\over \tilde {p}_{-\lambda,\omega}(\mu,\eta)}\leq C(n+1)\]
and \eqref{pp1c} is proved.\end{proof}

Applying this result, we can build  geometric optics solutions of the form \eqref{GO1}.
  \begin{prop}\label{p2} Let $q\in L^\infty(Q)$,  $\omega\in\mathbb S^{n-1}$.  Then, there exists $\lambda_0>1$ such that for $\lambda\geq \lambda_0$ the equation \eqref{eqGO1} admits a solution $u\in H^1(Q)$ of the form \eqref{GO1} with
\begin{equation}\label{p2a}\norm{w}_{H^k(Q)}\leq C\lambda^{k-1},\quad k=0,1,\end{equation}
where $C$ and $\lambda_0$ depend on $\Omega$, $\xi$, $T$, $M\geq \norm{q}_{L^{\infty}(Q)}$.
\end{prop}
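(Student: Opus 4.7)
I would substitute the ansatz \eqref{GO1} into \eqref{eqGO1} and use the conjugation identity $P_{-\lambda,\omega}=e^{\lambda(t+x\cdot\omega)}\Box\, e^{-\lambda(t+x\cdot\omega)}$ to turn $\Box u+qu=0$ into an equation for $w$. Writing $v=e^{-i\xi\cdot(t,x)}+w$, so that $u=e^{-\lambda(t+x\cdot\omega)}v$, the equation becomes
\[
P_{-\lambda,\omega}w+qw=-P_{-\lambda,\omega}\!\left(e^{-i\xi\cdot(t,x)}\right)-q\,e^{-i\xi\cdot(t,x)}.
\]
A direct computation using $P_{-\lambda,\omega}=\Box-2\lambda(\partial_t-\omega\cdot\nabla_x)$ and writing $\xi=(\xi_0,\xi')\in\R\times\R^n$ gives
\[
(\partial_t-\omega\cdot\nabla_x)e^{-i\xi\cdot(t,x)}=-i(\xi_0-\omega\cdot\xi')\,e^{-i\xi\cdot(t,x)}=0
\]
by the assumption $\xi\cdot(1,-\omega)=0$, so the $\lambda$-dependent piece vanishes and
\[
P_{-\lambda,\omega}\!\left(e^{-i\xi\cdot(t,x)}\right)=(|\xi'|^2-\xi_0^2)\,e^{-i\xi\cdot(t,x)}.
\]
This is the crucial cancellation: the right-hand side
\[
f_\xi:=-\bigl(|\xi'|^2-\xi_0^2+q\bigr)e^{-i\xi\cdot(t,x)}
\]
is bounded in $L^2(Q)$ uniformly in $\lambda$, with $\norm{f_\xi}_{L^2(Q)}\leq C(\xi,T,\Omega,M)$.

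Next, I would invoke \lemref{pp1} to convert the PDE $P_{-\lambda,\omega}w=-qw+f_\xi$ into the fixed-point equation
\[
w=E_{\lambda,\omega}(-qw+f_\xi)=:\Phi(w).
\]
By \eqref{pp1b} and $\norm{q}_{L^\infty(Q)}\leq M$,
\[
\norm{\Phi(w_1)-\Phi(w_2)}_{L^2(Q)}\leq \frac{CM}{\lambda}\norm{w_1-w_2}_{L^2(Q)},
\]
so picking $\lambda_0>2CM$ makes $\Phi$ a strict contraction on $L^2(Q)$ for every $\lambda\geq\lambda_0$. The Banach fixed-point theorem yields a unique $w\in L^2(Q)$ solving $w=\Phi(w)$, and comparing $\norm{w}_{L^2(Q)}\leq 2\norm{E_{\lambda,\omega}f_\xi}_{L^2(Q)}\leq 2C\lambda^{-1}\norm{f_\xi}_{L^2(Q)}$ gives the $k=0$ part of \eqref{p2a}.

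Finally, to obtain the $k=1$ bound, I would use the fixed-point relation once more together with \eqref{pp1c}: since $E_{\lambda,\omega}:L^2(Q)\to H^1(Q)$ with norm bounded uniformly in $\lambda$,
\[
\norm{w}_{H^1(Q)}=\norm{E_{\lambda,\omega}(-qw+f_\xi)}_{H^1(Q)}\leq C\bigl(M\norm{w}_{L^2(Q)}+\norm{f_\xi}_{L^2(Q)}\bigr)\leq C,
\]
which is exactly the claim for $k=1$. Unwinding the construction, $u=e^{-\lambda(t+x\cdot\omega)}(e^{-i\xi\cdot(t,x)}+w)\in H^1(Q)$ solves \eqref{eqGO1}.

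The main obstacle is essentially conceptual rather than technical: one must verify that the orthogonality condition $\xi\cdot(1,-\omega)=0$ is precisely what makes the inhomogeneity $P_{-\lambda,\omega}(e^{-i\xi\cdot(t,x)})$ of size $O(1)$ in $\lambda$ (rather than $O(\lambda)$), which is exactly the margin that the $\lambda^{-1}$ gain in \eqref{pp1b} can absorb when combined with the bounded multiplier $q$. Once that is in place, the argument is a standard Neumann-series / contraction-mapping construction.
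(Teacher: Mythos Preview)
Your proposal is correct and follows essentially the same route as the paper: you exploit the orthogonality $\xi\cdot(1,-\omega)=0$ to kill the $O(\lambda)$ term in $P_{-\lambda,\omega}\bigl(e^{-i\xi\cdot(t,x)}\bigr)$, set up the fixed-point equation $w=E_{\lambda,\omega}(-qw+f_\xi)$ using \lemref{pp1}, and then read off the $L^2$ and $H^1$ bounds from \eqref{pp1b}--\eqref{pp1c}. The paper phrases the fixed-point argument as a contraction on a ball in $L^2(Q)$ rather than on all of $L^2(Q)$, but this is an inessential difference.
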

\begin{proof} We start by recalling that
\[\begin{aligned}\Box e^{-\lambda(t+x\cdot\omega)}e^{-i\xi\cdot(t,x)}&=e^{-\lambda(t+x\cdot\omega)}\left(\Box e^{-i\xi\cdot(t,x)}+2i\lambda \xi\cdot(1,-\omega)e^{-i\xi\cdot(t,x)}\right)\\
\ &=e^{-\lambda(t+x\cdot\omega)}\Box e^{-i\xi\cdot(t,x)},\quad (t,x)\in Q.\end{aligned}\]
Thus, $w$ should be a solution of
\begin{equation}\label{p2b}\partial_t^2w-\Delta_xw-2\lambda(\partial_t-\omega\cdot\nabla_{x})w=-\left((\Box+q) e^{-i\xi\cdot(t,x)}+qw\right).\end{equation}
Therefore, according to Lemma \ref{pp1}, we can define $w$ as a solution of the equation $$w=-E_{\lambda,\omega}\left((\Box+q) e^{-i\xi\cdot(t,x)}+qw\right),\quad w\in L^2(Q)$$ with $E_{\lambda,\omega}\in\mathcal B(L^2(Q))$ given by Lemma \ref{pp1}.
For this purpose, we will use a standard fixed point argument associated to the map
\[\begin{array}{rccl} \mathcal G: & L^2(Q) & \to & L^2(Q), \\
 \ \\ & F & \mapsto &-E_{\lambda,\omega}\left[(\Box+q) e^{-i\xi\cdot(t,x)}+qF\right]. \end{array}\]
Indeed, in view of \eqref{pp1b}, fixing $M_1>0$ , there  exists $\lambda_0>1$ such that for $\lambda\geq \lambda_0$ the map $\mathcal G$ admits a unique fixed point $w$ in $\{u\in L^2(Q): \norm{u}_{L^2(Q)}\leq M_1\}$. In addition, condition \eqref{pp1b}-\eqref{pp1c} imply that $w\in H^1(Q)$ fulfills \eqref{p2a}. This completes the proof.
\end{proof}

\section{Carleman estimates}

This section is devoted to the proof of  Carleman estimates similar  to \cite{BJY} and \cite{BU}. More precisely, we fix $\omega\in\mathbb S^{n-1}$ and we consider the following estimates.

\begin{Thm}\label{c1}  Let $q\in L^\infty(Q)$ and  $u\in\mathcal C^2(\overline{Q})$.  If $u$ satisfies the condition 
 \begin{equation}\label{ttc1}u_{\vert \Sigma}=0,\quad u_{\vert t=0}=\partial_tu_{\vert t=0}=0\end{equation}
then there exists $\lambda_1>1$ depending only on  $\Omega$, $T$ and $M\geq \norm{q}_{L^\infty(Q)}$ such that the estimate
\begin{equation}\label{c1a}\begin{array}{l}\lambda \int_\Omega e^{-2\lambda( T+\omega\cdot x)}\abs{\partial_tu_{\vert t=T}}^2dx+\lambda\int_{\Sigma_{+,\omega}}e^{-2\lambda(t+\omega\cdot x)}\abs{\partial_\nu u}^2\abs{\omega\cdot\nu(x) } d\sigma(x)dt+\lambda^2\int_Qe^{-2\lambda(t+\omega\cdot x)}\abs{u}^2dxdt\\
\leq C\left(\int_Qe^{-2\lambda(t+\omega\cdot x)}\abs{(\partial_t^2-\Delta_x+q)u}^2dxdt+ \lambda^3\int_\Omega e^{-2\lambda(T+\omega\cdot x)}\abs{u_{\vert t=T}}^2dx+\lambda\int_\Omega e^{-2\lambda(T+\omega\cdot x)}\abs{\nabla_xu_{\vert t=T}}^2dx\right)\\
\ \ \ +C\lambda\int_{\Sigma_{-,\omega}}e^{-2\lambda(t+\omega\cdot x)}\abs{\partial_\nu u}^2\abs{\omega\cdot\nu(x) }d\sigma(x)dt\end{array}\end{equation}
holds true for $\lambda\geq \lambda_1$  with $C$  depending only on  $\Omega$, $T$ and $M\geq \norm{q}_{L^\infty(Q)}$.
If $u$ satisfies the condition
\begin{equation}\label{ttc3}u_{\vert \Sigma}=0,\quad u_{\vert t=T}=\partial_tu_{\vert t=T}=0\end{equation}
then the estimate
\begin{equation}\label{c1b}\begin{array}{l}\lambda\int_\Omega e^{2\lambda\omega\cdot x}\abs{\partial_tu_{\vert t=0}}^2dx+\lambda\int_{\Sigma_{-,\omega}}e^{2\lambda(t+\omega\cdot x)}\abs{\partial_\nu u}^2\abs{\omega\cdot\nu(x) } d\sigma(x)dt+\lambda^2\int_Qe^{2\lambda(t+\omega\cdot x)}\abs{u}^2dxdt\\
\leq C\left(\int_Qe^{2\lambda(t+\omega\cdot x)}\abs{(\partial_t^2-\Delta_x+q)u}^2dxdt+ \lambda^3\int_\Omega e^{2\lambda\omega\cdot x}\abs{u_{\vert t=0}}^2dx+\lambda\int_\Omega e^{2\lambda\omega\cdot x}\abs{\nabla_xu_{\vert t=0}}^2dx\right)\\
\ \ \ +C\lambda\int_{\Sigma_{+,\omega}}e^{2\lambda(t+\omega\cdot x)}\abs{\partial_\nu u}^2\abs{\omega\cdot\nu(x) }d\sigma(x)dt\end{array}\end{equation}
holds true for $\lambda\geq \lambda_1$.

\end{Thm}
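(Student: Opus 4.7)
\medskip
The natural approach is to conjugate by the Carleman weight and then run the multiplier method on the conjugated operator. Set $v=e^{-\lambda(t+\omega\cdot x)}u$ and $\varphi(t,x)=t+\omega\cdot x$. Since $|\partial_t\varphi|^2=|\nabla_x\varphi|^2=1$, the usual zero-order term $\lambda^2(|\partial_t\varphi|^2-|\nabla_x\varphi|^2)$ cancels and a direct computation yields
$$
e^{-\lambda\varphi}(\Box+q)u=P_{-\lambda,\omega}v+qv,\qquad P_{-\lambda,\omega}=\Box+2\lambda X,\qquad X:=\partial_t-\omega\cdot\nabla_x,
$$
which is exactly the operator introduced in Section~3. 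Under \eqref{ttc1} the conjugated function satisfies $v_{|\Sigma}=0$ and $v_{|t=0}=\partial_tv_{|t=0}=0$. The problem is thereby reduced to a weight-free Carleman estimate for $v$ controlled by $P_{-\lambda,\omega}v$, the statement in $u$ being recovered afterwards via the chain rule applied to the derivatives of the weight.

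\medskip
The core step is the multiplier identity obtained by testing $P_{-\lambda,\omega}v=f-qv$ against $-4\lambda Xv$, where $f=e^{-\lambda\varphi}(\Box+q)u$. Expanding the $L^2(Q)$ norm,
$$
\|P_{-\lambda,\omega}v\|^2_{L^2(Q)}=\|\Box v\|^2_{L^2(Q)}+4\lambda^2\|Xv\|^2_{L^2(Q)}+4\lambda\int_Q(\Box v)(Xv)\,dx\,dt.
$$
Because $\varphi$ is characteristic for $\Box$, the product $(\Box v)(Xv)$ can be written as a pure divergence in $(t,x)$ whose entries are quadratic in the first derivatives of $v$. Integrating this divergence over $Q$ produces only boundary contributions: the terms on $\{t=0\}$ vanish by the Cauchy data, those on $\{t=T\}$ give $\int_\Omega(|\partial_tv|^2+|\nabla_xv|^2)_{|t=T}\,dx$ with the favorable sign, and on $\Sigma$, since $v_{|\Sigma}=0$ annihilates all tangential derivatives, only $|\partial_\nu v|^2$ survives, weighted by $\omega\cdot\nu(x)$, which naturally splits over $\partial\Omega_{+,\omega}\cup\partial\Omega_{-,\omega}$ according to the sign of $\omega\cdot\nu$.

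\medskip
I would then move the favorable boundary pieces (at $\{t=T\}$ and on $\Sigma_{+,\omega}$) to the LHS, leave the unfavorable $\Sigma_{-,\omega}$-integral on the RHS, and choose $\lambda_1$ large enough to absorb $\|qv\|^2\leq M^2\|v\|^2$. To upgrade $\lambda^2\|Xv\|^2_{L^2(Q)}$ into the required $\lambda^2\|v\|^2_{L^2(Q)}$, I would use the transport nature of $X$: for every $(t,x)\in Q$ the backward characteristic $s\mapsto(t-s,x+s\omega)$ hits either $\{t=0\}$ or $\Sigma$ inside $\overline Q$, on both of which $v$ vanishes, so integrating $Xv$ along it gives $|v(t,x)|^2\leq T\int_0^T|Xv|^2\,ds$ along the characteristic, and Fubini together with the boundedness of $\Omega$ yields $\|v\|^2_{L^2(Q)}\leq C\|Xv\|^2_{L^2(Q)}$. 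Undoing the conjugation (noting $\partial_t v=e^{-\lambda\varphi}(\partial_tu-\lambda u)$ and $\nabla_xv=e^{-\lambda\varphi}(\nabla_xu-\lambda\omega u)$) converts the LHS terms in $v$ at $t=T$ into LHS terms in $\partial_tu$ and produces the additional RHS contributions $\lambda^3\int_\Omega e^{-2\lambda(T+\omega\cdot x)}|u_{|t=T}|^2\,dx$ and $\lambda\int_\Omega e^{-2\lambda(T+\omega\cdot x)}|\nabla_xu_{|t=T}|^2\,dx$ with the exact powers claimed in \eqref{c1a}.

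\medskip
Finally, the estimate \eqref{c1b} follows by symmetry: applying \eqref{c1a} to $\tilde u(t,x):=u(T-t,x)$ transforms \eqref{ttc3} into \eqref{ttc1}, exchanges the roles of $t=0$ and $t=T$, and (because the sign of $\omega\cdot\nu$ is unchanged while the direction of time is reversed) swaps $\Sigma_{+,\omega}$ with $\Sigma_{-,\omega}$; rewriting in the original variables yields \eqref{c1b}. The main obstacle is the extraction of the volume term $\lambda^2\|v\|^2$: since the linear weight $\varphi$ is characteristic for $\Box$, the classical sub-elliptic Carleman gain is degenerate and the cross term $(\Box v)(Xv)$ is a pure divergence, so the required factor $\lambda$ can only be recovered from $\lambda^2\|Xv\|^2$ via the transport-based Poincar\'e argument exploiting $v_{|t=0}=0$ and $v_{|\Sigma}=0$. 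The second delicate point is the sign bookkeeping in the boundary integrals so that precisely the $\Sigma_{-,\omega}$ part, and nothing more, is thrown on the right-hand side.
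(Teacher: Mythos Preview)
Your strategy is the paper's: set $v=e^{-\lambda\varphi}u$, expand $\|\Box v+2\lambda Xv\|_{L^2(Q)}^2$, observe that $4\lambda\int_Q(\Box v)(Xv)$ is a pure space--time divergence, and recover $c\lambda^2\|v\|_{L^2(Q)}^2$ from $4\lambda^2\|Xv\|_{L^2(Q)}^2$ by the transport Poincar\'e inequality along the characteristics of $X=\partial_t-\omega\cdot\nabla_x$. The paper outsources this last step to \cite{BJY}, whereas you make it explicit. (One notational slip: with the paper's convention $P_{s,\omega}=e^{-s\varphi}\Box e^{s\varphi}$, the conjugated operator arising from $v=e^{-\lambda\varphi}u$ is $P_{\lambda,\omega}$, not $P_{-\lambda,\omega}$.)

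Two points need correction. First, the $t=T$ contribution of the divergence is not $|\partial_tv|^2+|\nabla_xv|^2$ but $|\partial_tv|^2+|\nabla_xv|^2-2(\partial_tv)(\omega\cdot\nabla_xv)$; the indefinite cross piece is handled (as in the paper's Lemma~\ref{tc}) by Young's inequality, $4\lambda\bigl|\int_\Omega(\partial_tv)(\omega\cdot\nabla_xv)\bigr|\leq\frac{\lambda}{4}\int_\Omega|\partial_tv|^2+16\lambda\int_\Omega|\nabla_xv|^2$, which sends $14\lambda\int_\Omega|\nabla_xv_{|t=T}|^2$ to the right-hand side of the $v$-estimate. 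It is \emph{this} right-hand term that, through $\nabla_xv=e^{-\lambda\varphi}(\nabla_xu-\lambda\omega u)$, generates the $\lambda\int_\Omega e^{-2\lambda(T+\omega\cdot x)}|\nabla_xu_{|t=T}|^2$ contribution (and part of the $\lambda^3\int_\Omega e^{-2\lambda(T+\omega\cdot x)}|u_{|t=T}|^2$ contribution) on the right of \eqref{c1a}; it does not arise from undoing the conjugation on a favorable left-hand term. Second, the time reversal $\tilde u(t,x)=u(T-t,x)$ alone does not yield \eqref{c1b}: under $t\mapsto T-t$ the weight $e^{-2\lambda(t+\omega\cdot x)}$ becomes, up to a constant, $e^{2\lambda(t-\omega\cdot x)}$ with the wrong spatial sign, and the sets $\Sigma_{\pm,\omega}$ are unchanged, not swapped. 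The paper's remedy (proof of Lemma~\ref{tc}) is to apply the first estimate with $\omega$ replaced by $-\omega$ as well; then $\Sigma_{\pm,-\omega}=\Sigma_{\mp,\omega}$ and the weight transforms correctly to $e^{2\lambda(t+\omega\cdot x)}$.
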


In order to prove these estimates, we fix $u\in\mathcal C^2(\overline{Q})$ satisfying \eqref{ttc1} (resp \eqref{ttc3}) and we set $v=e^{-\lambda(t+\omega\cdot x)}u$ (resp $v=e^{\lambda(t+\omega\cdot x)}u$)  in such a way that
\begin{equation}\label{c1c}e^{-\lambda(t+\omega\cdot x)}\Box u=P_{\lambda,\omega}v,\quad \left(\textrm{resp }e^{\lambda(t+\omega\cdot x)}\Box u=P_{-\lambda,\omega}v\right).\end{equation}

Then, we consider  the following estimates associated to the weighted operators $P_{\pm\lambda,\omega}$.

\begin{lem}\label{tc} Let $v\in \mathcal C^2(\overline{Q})$ and $\lambda>1$. If $v$ satisfies the condition
 \begin{equation}\label{tc1}v_{\vert \Sigma}=0,\quad v_{\vert t=0}=\partial_tv_{\vert t=0}=0\end{equation}
then the estimate
 \begin{equation}\label{tc2}\begin{array}{l}\lambda\int_\Omega\abs{\partial_tv_{\vert t=T}}^2dx+2\lambda\int_{\Sigma_{+,\omega}}\abs{\partial_\nu v}^2\omega\cdot\nu(x)  d\sigma(x)dt+c\lambda^2\int_Q\abs{v}^2dxdt\\
\leq \int_Q\abs{P_{\lambda,\omega}v}^2dxdt+ 14\lambda \int_\Omega\abs{\nabla_xv_{\vert t=T}}^2dx+2\lambda\int_{\Sigma_{-,\omega}}\abs{\partial_\nu v}^2\abs{\omega\cdot\nu(x) }d\sigma(x)dt\end{array}\end{equation}
holds true for  $c>0$  depending only on  $\Omega$ and $T$. If $v$ satisfies the condition
 \begin{equation}\label{tc3}v_{\vert \Sigma}=0,\quad v_{\vert t=T}=\partial_tv_{\vert t=T}=0\end{equation}
then the estimate
 \begin{equation}\label{tc4}\begin{array}{l}\lambda\int_\Omega\abs{\partial_tv_{\vert t=0}}^2dx+2\lambda\int_{\Sigma_{-,\omega}}\abs{\partial_\nu v}^2\abs{\omega\cdot\nu(x) } d\sigma(x)dt+c\lambda^2\int_Q\abs{v}^2dxdt\\
\leq \int_Q\abs{P_{-\lambda,\omega}v}^2dxdt+ 14\lambda\int_\Omega\abs{\nabla_xv_{\vert t=0}}^2dx+2\lambda\int_{\Sigma_{+,\omega}}\abs{\partial_\nu v}^2\omega\cdot\nu(x) d\sigma(x)dt\end{array}\end{equation}
holds true.\end{lem}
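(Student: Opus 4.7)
The plan is to decompose the conjugated operator as $P_{\lambda,\omega} = A + B$ with $A = \Box$ formally self-adjoint and $B = 2\lambda(\partial_t - \omega\cdot\nabla_x)$ of first order, so that
\begin{equation*}
\norm{P_{\lambda,\omega}v}_{L^2(Q)}^2 \;=\; \norm{Av}^2 + 2\langle Av, Bv\rangle_{L^2(Q)} + \norm{Bv}^2 \;\geq\; 2\langle Av, Bv\rangle + \norm{Bv}^2.
\end{equation*}
The three positive quantities on the left-hand side of \eqref{tc2} are extracted one from $2\langle Av, Bv\rangle$ (the $t=T$ and lateral-boundary terms) and one from $\norm{Bv}^2$ (the $\lambda^2\norm{v}^2$ term), so all that has to be produced is the cross term and a transport-Poincar\'e estimate for $L = \partial_t - \omega\cdot\nabla_x$.

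The cross term $2\langle Av, Bv\rangle = 4\lambda\int_Q \Box v\cdot(\partial_t - \omega\cdot\nabla_x)v\,dxdt$ is computed by integration by parts under \eqref{tc1}. The initial conditions $v_{|t=0} = \partial_tv_{|t=0} = 0$ (which also force $\nabla_xv_{|t=0} = 0$) annihilate every contribution at $t=0$, while $v_{|\Sigma} = 0$ gives $\partial_tv_{|\Sigma} = 0$ and $\nabla_xv_{|\Sigma} = (\partial_\nu v)\nu$, reducing the lateral-boundary contribution to $2\lambda\int_\Sigma|\partial_\nu v|^2\,\omega\cdot\nu\,d\sigma dt$; splitting this integral over $\Sigma_{\pm,\omega}$ already places the $\Sigma_{+,\omega}$-piece on the left and the $\Sigma_{-,\omega}$-piece on the right of \eqref{tc2}. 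What remains at $t=T$ is
\begin{equation*}
2\lambda\int_\Omega\bigl(\abs{\partial_tv}^2 + \abs{\nabla_xv}^2 - 2\,\partial_tv\,\omega\cdot\nabla_xv\bigr)\big|_{t=T}\,dx,
\end{equation*}
and the elementary inequality $2\abs{\partial_tv\,\omega\cdot\nabla_xv} \leq \tfrac12\abs{\partial_tv}^2 + 2\abs{\nabla_xv}^2$ extracts $\lambda\int_\Omega\abs{\partial_tv_{|t=T}}^2dx$ on the left at the cost of a multiple of $\lambda\int_\Omega\abs{\nabla_xv_{|t=T}}^2dx$ on the right; the factor $14$ in \eqref{tc2} is conservative and leaves ample room.

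The $c\lambda^2\norm{v}^2_{L^2(Q)}$ term is produced from $\norm{Bv}^2 = 4\lambda^2\norm{(\partial_t-\omega\cdot\nabla_x)v}^2_{L^2(Q)}$ via a transport-type Poincar\'e inequality. For $(t,x)\in Q$ the backward characteristic $s\mapsto(t-s, x+s\omega)$ of $L$ reaches a point where $v$ vanishes at some time $r^\ast(t,x)\leq T$, namely either the face $\{t=0\}$ (where $v = 0$ by \eqref{tc1}) or $\Sigma$ (where $v = 0$ by the Dirichlet condition). The fundamental theorem of calculus therefore yields
\begin{equation*}
v(t,x) \;=\; \int_0^{r^\ast(t,x)}Lv(t-s, x+s\omega)\,ds,
\end{equation*}
and Cauchy--Schwarz together with Fubini (the change of variables $(t,x)\mapsto(t-s, x+s\omega)$ is a translation of Jacobian $1$) give $\norm{v}_{L^2(Q)}^2 \leq T^2\norm{Lv}_{L^2(Q)}^2$, i.e. $\norm{Bv}^2\geq (4/T^2)\,\lambda^2\norm{v}^2_{L^2(Q)}$, which fixes $c$ depending only on $T$ (and, through the geometry of the exit time $r^\ast$, on $\Omega$).

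The second estimate \eqref{tc4} follows by time reversal: setting $\tilde v(t,x) = v(T-t, x)$, one checks that $\tilde v$ satisfies \eqref{tc1} whenever $v$ satisfies \eqref{tc3} and that $P_{-\lambda,\omega}v(T-t,x) = P_{\lambda,-\omega}\tilde v(t,x)$, while the identities $\Sigma_{+,-\omega} = \Sigma_{-,\omega}$ and $\Sigma_{-,-\omega} = \Sigma_{+,\omega}$ interchange the lateral-boundary data in the correct way; applying \eqref{tc2} to $\tilde v$ with $\omega$ replaced by $-\omega$ then yields \eqref{tc4} with the same constants. The main conceptual obstacle is the transport-Poincar\'e step, since it is the only point in the argument where the vanishing of $v$ on $\Sigma$ and at $t=0$ must be combined in a genuinely geometric way; all other steps are routine integration by parts and Young's inequality.
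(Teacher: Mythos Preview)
Your proof is correct and follows essentially the same route as the paper: expand $\norm{P_{\lambda,\omega}v}^2$, extract boundary terms from the cross term $2\langle \Box v, 2\lambda(\partial_t-\omega\cdot\nabla_x)v\rangle$ via integration by parts, handle the $t=T$ mixed term by Young's inequality, obtain the $c\lambda^2\norm{v}^2$ contribution from a transport Poincar\'e inequality for $\partial_t-\omega\cdot\nabla_x$, and deduce \eqref{tc4} from \eqref{tc2} by the time reflection $t\mapsto T-t$ together with $\omega\mapsto-\omega$. The only difference is that the paper delegates the integration-by-parts identity and the Poincar\'e-type bound to the reference \cite{BJY}, whereas you carry them out explicitly; your choice of Young constants even yields a coefficient smaller than the stated $14$, which, as you note, is conservative.
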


\begin{proof} We start with \eqref{tc2}. For this purpose we fix $v\in \mathcal C^2(\overline{Q})$ satisfying \eqref{tc1} and we consider
\[I_{\lambda,\omega}=\int_Q |P_{\lambda,\omega}v|^2dt dx.\]
Without lost of generality we assume that $v$ is real valued. Repeating some arguments of \cite{BJY} (see the formula 2 line before  (2.4) in page 1225 of  \cite{BJY} and formula (2.5) in page 1226 of  \cite{BJY}) we obtain the following
\[\begin{aligned} I_{\lambda,\omega}\geq \int_Q|\Box v|^2dtdx+c\lambda^2\int_Q\abs{v}^2dxdt+2\lambda\int_{\Sigma}\abs{\partial_\nu v}^2\omega\cdot\nu(x)  d\sigma(x)dt\\+2\lambda\int_\Omega\abs{\partial_tv_{\vert t=T}}^2dx+ 2\lambda \int_\Omega\abs{\nabla_xv_{\vert t=T}}^2dx-4\lambda \int_\Omega(\partial_tv_{\vert t=T})(\omega\cdot\nabla_xv_{\vert t=T})dx.\end{aligned}\]
On the other hand, an application of the Cauchy-Schwarz inequality yields
\[4\lambda \abs{\int_\Omega(\partial_tv_{\vert t=T})(\omega\cdot\nabla_xv_{\vert t=T})dx}\leq {\lambda\over4}\int_\Omega\abs{\partial_tv_{\vert t=T}}^2dx+16\lambda\int_\Omega\abs{\nabla_xv_{\vert t=T}}^2dx\]
and we deduce that
\[\begin{array}{l} I_{\lambda,\omega}+14\lambda \int_\Omega\abs{\nabla_xv_{\vert t=T}}^2dx\\
\geq\int_Q|\Box v|^2dtdx+c\lambda^2\int_Q\abs{v}^2dxdt+2\lambda\int_{\Sigma}\abs{\partial_\nu v}^2\omega\cdot\nu(x)  d\sigma(x)dt+\lambda\int_\Omega\abs{\partial_tv_{\vert t=T}}^2dx.\end{array}\]
From this last estimate we deduce easily \eqref{tc2}. Now let us consider \eqref{tc4}. For this purpose note that for $v$ satisfying \eqref{tc3}, $w$ defined by $w(t,x)=v(T-t,x)$ satisfies \eqref{tc1}. Thus, applying \eqref{tc2} to $w$ with $\omega$ replaced by $-\omega$ we obtain \eqref{tc4}.\end{proof}

In light of Lemma \ref{tc}, we are now in position to prove Theorem \ref{c1}.
\ \\
\textbf{Proof of Theorem \ref{c1}.} Let us first consider the case $q=0$. Note  that for $u$ satisfying \eqref{ttc1}, $v=e^{-\lambda(t+\omega\cdot x)}u$ satisfies  \eqref{tc1}. Moreover, we have \eqref{c1c}
and  \eqref{ttc1} implies $\partial_\nu v_{\vert\Sigma}=e^{-\lambda(t+\omega\cdot x)}\partial_\nu u_{\vert\Sigma}$. Finally, using the fact that
\[\partial_tu=\partial_t(e^{\lambda(t+\omega\cdot x)} v)=\lambda u+e^{\lambda(t+\omega\cdot x)} \partial_tv,\quad \nabla_x v=e^{-\lambda(t+\omega\cdot x)}(\nabla_x u-\lambda u\omega),\]
we obtain
\[\int_\Omega e^{-2\lambda(T+\omega\cdot x)}\abs{\partial_tu_{\vert t=T}}^2dx\leq 2\int_\Omega \abs{\partial_tv_{\vert t=T}}^2dx+2\lambda^2\int_\Omega e^{-2\lambda(T+\omega\cdot x)}\abs{u_{\vert t=T}}^2dx,\]
\[\int_\Omega \abs{\nabla_x v_{\vert t=T}}^2dx\leq 2\lambda^2\int_\Omega e^{-2\lambda(T+\omega\cdot x)}\abs{u_{\vert t=T}}^2dx+2\int_\Omega e^{-2\lambda(T+\omega\cdot x)}\abs{\nabla_x u_{\vert t=T}}^2dx.\]
 Thus, applying the Carleman estimate \eqref{tc2} to $v$, we deduce \eqref{c1a}. For $q\neq0$,
we have 
 \[\abs{\partial_t^2u-\Delta_xu }^2=\abs{\partial_t^2u-\Delta_xu+qu-qu}^2\leq 2\abs{(\partial_t^2-\Delta_x+q)u}^2+2\norm{q}^2_{L^\infty(Q)}\abs{u}^2\]
 and hence if we choose $\lambda_1>2C\norm{q}^2_{L^\infty(Q)}$, replacing $C$ by
 \[C_1=\frac{C\lambda_1^2}{\lambda_1^2-2C\norm{q}^2_{L^\infty(Q)}},\]
 we deduce \eqref{c1a}  from the same estimate when $q=0$. Using similar arguments, we prove \eqref{c1b}.\qed

\begin{rem}\label{rr} Note that, by density, estimate \eqref{c1a} can be extended to any function $u\in\mathcal C^1([0,T]; L^2(\Omega))\cap \mathcal C([0,T]; H^1(\Omega))$ satisfying \eqref{tc1}, $(\partial_t^2-\Delta_x)u\in L^2(Q)$ and $\partial_\nu u\in L^2(\Sigma)$.\end{rem}

\section{Geometric optics solutions vanishing on parts of the boundary}
In this section we fix $q\in L^\infty(Q)$. From now on, for all $y\in\mathbb S^{n-1}$ and all  $r>0$, we set
\[\partial\Omega_{+,r,y}=\{x\in\partial\Omega:\ \nu(x)\cdot y>r\},\quad\partial\Omega_{-,r,y}=\{x\in\partial\Omega:\ \nu(x)\cdot y\leq r\}\]
and $\Sigma_{\pm,r,y}=(0,T)\times \partial\Omega_{\pm,r,y}$. Here and in the remaining of this text we always assume, without mentioning it, that $y$ and $r$ are chosen in such way that $\partial\Omega_{\pm,r,\pm y}$ contain  a non-empty relatively open subset of $\partial\Omega$. Without lost of generality we  assume that there exists $0<\epsilon<1$ such that for all $\omega\in\{y\in\mathbb S^{n-1}:|y-\omega_0|\leq\epsilon\} $ we have $\partial\Omega_{-,\epsilon,-\omega}\subset F'$.
The goal of this section is to use the Carleman estimate \eqref{c1b} in order to build  solutions $u\in H_{\Box}(Q)$ to 
\begin{equation}
\label{(5.1)}
\left\{
\begin{array}{l}
(\partial_t^2-\Delta_x +q(t,x))u=0\ \ \textrm{in }  Q,
\\
u_{\vert t=0}=0,
\\
u=0,\quad \ \textrm{on } \Sigma_{+,\epsilon/2,-\omega},
\end{array}
\right.
\end{equation}
 of the form
\bel{CGO1a}
u(t,x)=e^{\lambda (t+\omega\cdot x)}\left( 1+z(t,x) \right),\quad (t,x)\in Q.
\ee
Here $\omega\in\{y\in\mathbb S^{n-1}:|y-\omega_0|\leq\epsilon\} $,  $z \in e^{-\lambda (t+\omega\cdot x)}H_{\Box}(Q)$ fulfills: $z(0,x)=-1$ , $x\in\Omega$, $z=-1$ on $\Sigma_{+,\epsilon/2,-\omega}$ and

\bel{CGO1b}
\| z \|_{L^2(Q)}\leq C\lambda^{-\frac{1}{2}}
\ee
with $C$ depending on $F'$, $\Omega$, $T$ and any $M\geq\norm{q}_{L^\infty(Q)}$. Since $\Sigma\setminus F\subset\Sigma\setminus \Sigma_{-,\epsilon,-\omega}=\Sigma_{+,\epsilon,-\omega}$ and since $\Sigma_{+,\epsilon/2,-\omega}$ is a neighborhood of $\Sigma_{+,\epsilon,-\omega}$ in $\Sigma$,  it is clear that condition \eqref{(5.1)} implies $(\tau_{0,1}u,\tau_{0,3}u)\in\mathcal H_F(\partial Q)$  (recall that for $v\in\mathcal C^\infty(\overline{Q})$, $\tau_{0,1}v=v_{|\Sigma}$, $\tau_{0,3}v=\partial_tv_{|t=0}$).

The main result of this section can be stated as follows.

\begin{Thm}\label{tt1} Let $q\in L^\infty(Q)$, $\omega\in\{y\in\mathbb S^{n-1}:|y-\omega_0|\leq\epsilon\} $. For all $\lambda\geq \lambda_1$, with $\lambda_1$ the constant of Theorem \ref{c1},  there exists a solution $u\in H_{\Box}(Q)$ of \eqref{(5.1)} of the form \eqref{CGO1a} with $z$ satisfying \eqref{CGO1b}. \end{Thm}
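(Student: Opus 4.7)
The plan is to produce $u$ of the prescribed form via a Hahn-Banach/Riesz argument based on the Carleman estimate \eqref{c1b}. Writing $u=e^{\lambda(t+\omega\cdot x)}+v$ with $v:=e^{\lambda(t+\omega\cdot x)}z$, the problem is equivalent to constructing $v\in L^2(Q)$ with $\|e^{-\lambda(t+\omega\cdot x)}v\|_{L^2(Q)}\leq C\lambda^{-1/2}$ satisfying
\[
(\Box+q)v=-qe^{\lambda(t+\omega\cdot x)}\text{ in }Q,\qquad v_{|t=0}=-e^{\lambda\omega\cdot x},\qquad v_{|\Sigma_{+,\epsilon/2,-\omega}}=-e^{\lambda(t+\omega\cdot x)}.
\]
Since no conditions are imposed on $\Sigma_{-,\epsilon/2,-\omega}$ or at $t=T$, the problem is underdetermined, which is exactly what makes the duality approach work.

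Introduce the test function space
\[
\mathcal S=\{\phi\in\mathcal C^2(\overline{Q}):\ \phi_{|\Sigma}=0,\ \phi_{|t=0}=0,\ \phi_{|t=T}=0,\ \partial_t\phi_{|t=T}=0,\ \partial_\nu\phi_{|\Sigma_{-,\epsilon/2,-\omega}}=0\}
\]
and the linear functional
\[
L(\phi)=-\int_Q qe^{\lambda(t+\omega\cdot x)}\phi\,dxdt+\int_\Omega e^{\lambda\omega\cdot x}\partial_t\phi_{|t=0}\,dx+\int_{\Sigma_{+,\epsilon/2,-\omega}}e^{\lambda(t+\omega\cdot x)}\partial_\nu\phi\,d\sigma dt.
\]
A Green's identity computation shows that any such $v$ must satisfy $\int_Q v(\Box+q)\phi\,dxdt=L(\phi)$ for every $\phi\in\mathcal S$; the constraint $\partial_\nu\phi_{|\Sigma_{-,\epsilon/2,-\omega}}=0$ is imposed precisely to kill the boundary contribution carrying the uncontrolled trace of $v$ on $\Sigma_{-,\epsilon/2,-\omega}$.

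The crucial step is the a priori inequality
\[
|L(\phi)|\leq C\lambda^{-1/2}\|e^{\lambda(t+\omega\cdot x)}(\Box+q)\phi\|_{L^2(Q)},\qquad\phi\in\mathcal S,
\]
obtained from \eqref{c1b}. For $\phi\in\mathcal S$ the right-hand side of \eqref{c1b} collapses to $C\int_Q e^{2\lambda(t+\omega\cdot x)}|(\Box+q)\phi|^2\,dxdt$: the $t=0$ contributions disappear because $\phi(0,\cdot)\equiv 0$ on $\overline\Omega$ forces $\nabla_x\phi_{|t=0}=0$, and the $\Sigma_{+,\omega}$ contribution disappears because $\Sigma_{+,\omega}\subset\Sigma_{-,\epsilon/2,-\omega}$ (since $\nu\cdot\omega\geq 0$ implies $\nu\cdot\omega\geq-\epsilon/2$), on which set $\partial_\nu\phi=0$. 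Cauchy-Schwarz then bounds the three pieces of $L(\phi)$ by the three non-trivial left-hand side terms of \eqref{c1b}: the volume piece against $\lambda^2\int_Q e^{2\lambda(t+\omega\cdot x)}|\phi|^2$, the initial piece against $\lambda\int_\Omega e^{2\lambda\omega\cdot x}|\partial_t\phi_{|t=0}|^2$, and the boundary piece against $\lambda\int_{\Sigma_{-,\omega}}e^{2\lambda(t+\omega\cdot x)}|\partial_\nu\phi|^2|\omega\cdot\nu|$, using $\Sigma_{+,\epsilon/2,-\omega}\subset\Sigma_{-,\omega}$ and the pointwise lower bound $|\omega\cdot\nu|\geq\epsilon/2$ on $\Sigma_{+,\epsilon/2,-\omega}$ to compensate for the missing weight factor $|\omega\cdot\nu|$.

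The Carleman estimate also gives injectivity of $\phi\mapsto e^{\lambda(t+\omega\cdot x)}(\Box+q)\phi$ on $\mathcal S$, so $L$ descends to a well-defined bounded linear functional of norm $\leq C\lambda^{-1/2}$ on the image of this map inside $L^2(Q)$. Hahn-Banach and Riesz representation then yield $w\in L^2(Q)$ with $\|w\|_{L^2(Q)}\leq C\lambda^{-1/2}$ such that $L(\phi)=\int_Q e^{\lambda(t+\omega\cdot x)}w\,(\Box+q)\phi\,dxdt$ for every $\phi\in\mathcal S$. Setting $z:=w$ and $u:=e^{\lambda(t+\omega\cdot x)}(1+z)$, one reads off the three required properties by specialising $\phi$: testing against $\mathcal C^\infty_0(Q)\subset\mathcal S$ gives $(\Box+q)u=0$ in $\mathcal D'(Q)$, whence $\Box u=-qu\in L^2(Q)$ and $u\in H_\Box(Q)$; testing against $\phi\in\mathcal S$ with $\partial_t\phi_{|t=0}$ ranging through $\mathcal C^\infty_0(\Omega)$, resp. with $\partial_\nu\phi$ ranging through smooth functions compactly supported in $\Sigma_{+,\epsilon/2,-\omega}$, together with the trace theory of Proposition \ref{trace}, forces $u_{|t=0}=0$ and $u_{|\Sigma_{+,\epsilon/2,-\omega}}=0$ in the appropriate dual sense. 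The bound $\|z\|_{L^2(Q)}=\|w\|_{L^2(Q)}\leq C\lambda^{-1/2}$ gives \eqref{CGO1b}. The main technical obstacle is the last trace identification, which requires verifying that $\mathcal S$ contains enough functions to separate boundary values in the dual spaces furnished by Proposition \ref{trace}.
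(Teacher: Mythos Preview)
Your approach is essentially the paper's: Hahn--Banach duality driven by the Carleman estimate \eqref{c1b}. The organizational difference lies in how the lateral boundary is handled. You constrain the test functions by imposing $\partial_\nu\phi=0$ on $\Sigma_{-,\epsilon/2,-\omega}$, which kills the unfavorable right-hand term of \eqref{c1b} and lets you prescribe the trace of $u$ directly on $\Sigma_{+,\epsilon/2,-\omega}$, where the lower bound $|\omega\cdot\nu|\geq\epsilon/2$ absorbs the missing weight. The paper instead leaves $\partial_\nu\phi$ unrestricted on $\Sigma$, treats $\partial_\nu\phi_{|\Sigma_{+,\omega}}$ as a second variable (so the functional acts on $L_\lambda(Q)\times L_{\lambda,\lambda\gamma,+}$ and Riesz produces an auxiliary component $u_+$ giving the trace on $\Sigma_{+,\omega}$), and prescribes the trace on all of $\Sigma_{-,\omega}$; to keep the data $v_-$ in $L_{-\lambda,\gamma^{-1},-}$ despite the singular weight $|\omega\cdot\nu|^{-1}$ near $\omega\cdot\nu=0$, it inserts a spatial cutoff $\psi$ supported in $\{\omega\cdot\nu<-\epsilon/3\}$ and equal to $1$ on $\partial\Omega_{+,\epsilon/2,-\omega}$. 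The paper packages this as an intermediate existence result (Lemma \ref{ppp1}) for general data, which makes the trace identification routine since $\partial_\nu\phi$ is unconstrained on the test space; your route is more direct and avoids both the cutoff and the extra dual component, but, as you correctly flag, it shifts the work into checking that $\mathcal S$ is rich enough to separate traces on $\{t=0\}$ and on $\Sigma_{+,\epsilon/2,-\omega}$ in the distributional sense of Proposition \ref{trace}. Both variants are valid.
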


In order to prove existence of such solutions of \eqref{(5.1)} we need some preliminary tools and an intermediate result.
\subsection{Weighted spaces}

In this subsection we give the definition of some weighted spaces. We set $s\in\R$, we fix $\omega\in\{y\in\mathbb S^{n-1}:|y-\omega_0|\leq\epsilon\} $ and we denote by $\gamma$ the function defined on $\partial\Omega$ by
\[\gamma(x)=\abs{\omega\cdot\nu(x)},\quad x\in\partial\Omega.\]
We introduce  the spaces $L_s(Q)$, $L_s(\Omega)$,  and for all non negative measurable function $h$ on $\partial\Omega$ the spaces $L_{s,h,\pm}$  defined respectively by
\[L_s(Q)=e^{-s(t+\omega\cdot x)}L^2(Q),\quad L_s(\Omega)=e^{-s\omega\cdot x}L^2(\Omega),\quad L_{s,h,\pm}=\{f:\ e^{s(t+\omega\cdot x)}h^{{1\over2}}(x)f\in L^2(\Sigma_{\pm,\omega})\}\]
with the associated norm
\[\norm{u}_s=\left(\int_Qe^{2s(t+\omega\cdot x)}\abs{u}^2dxdt\right)^{\frac{1}{2}},\quad u\in L_s(Q),\]
\[\norm{u}_{s,0}=\left(\int_\Omega e^{2s\omega\cdot x}\abs{u}^2dx\right)^{\frac{1}{2}},\quad u\in L_s(\Omega),\]
\[\norm{u}_{s,h,\pm}=\left(\int_{\Sigma_{\pm,\omega}} e^{2s(t+\omega\cdot x)}h(x)\abs{u}^2d\sigma(x)dt\right)^{\frac{1}{2}},\quad u\in L_{s,h,\pm}.\]

\subsection{Intermediate result}

We set  the space
\[\mathcal D=\{v\in\mathcal C^2(\overline{Q}): \ v_{\vert \Sigma}=0,\ v_{\vert t=T}=\partial_tv_{\vert t=T}=v_{\vert t=0}=0\}\]
and, in view of Theorem \ref{c1}, applying the Carleman estimate \eqref{c1b} to any $f\in \mathcal D$ we obtain
\begin{equation}\label{caca}\lambda\norm{f}_\lambda+\lambda^{\frac{1}{2}}\norm{\partial_tf_{\vert t=0}}_{\lambda,0}+\lambda^{\frac{1}{2}}\norm{\partial_\nu f}_{\lambda,\gamma,-}\leq C(\norm{(\partial_t^2-\Delta_x+q)f}_\lambda+\norm{\partial_\nu f}_{\lambda,\lambda\gamma,+}),\quad \lambda\geq \lambda_1.\end{equation}
We introduce also the space
\[\mathcal M=\{((\partial_t^2-\Delta_x+q)v,\partial_\nu v_{\vert\Sigma_{+,\omega}}):\ v\in\mathcal D\}\]
and  think of $\mathcal M$ as a subspace of $L_\lambda(Q)\times L_{\lambda,\lambda\gamma,+}$. We consider the following intermediate result.

\begin{lem}\label{ppp1} Given $\lambda\geq \lambda_1$, with $\lambda_1$ the constant of Theorem \ref{c1}, and
\[v\in L_{-\lambda}(Q),\quad v_-\in L_{-\lambda,\gamma^{-1},-},\quad v_0\in L_{-\lambda}(\Omega),\]
there exists  $u\in L_{-\lambda}(Q)$ such that:\\
1) $(\partial_t^2-\Delta_x+q)u=v$,\\
2) $u_{\vert \Sigma_{-,\omega}}=v_-,\ u_{|t=0}=v_0$,\\
3) $\norm{u}_{-\lambda}\leq  C\left(\lambda^{-1}\norm{v}_{-\lambda}+\lambda^{-\frac{1}{2}}\norm{v_-}_{-\lambda,\gamma^{-1},-}+\lambda^{-\frac{1}{2}}\norm{v_0}_{-\lambda,0}\right)$ with $C$ depending on $\Omega$, $T$,\\
 $M\geq\norm{q}_{L^\infty(Q)}$.
\end{lem}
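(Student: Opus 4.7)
The plan is a Hahn--Banach / Riesz representation duality argument based on the Carleman estimate \eqref{caca}, in the spirit of \cite{BU}. Formally, if $u\in\mathcal{C}^2(\overline Q)$ were a classical solution of 1)--2), integration by parts against any $f\in\mathcal{D}$ (which has $f_{\vert\Sigma}=f_{\vert t=0}=f_{\vert t=T}=\partial_t f_{\vert t=T}=0$) yields the identity
\begin{equation*}
\int_Q u\,\overline{(\partial_t^2-\Delta_x+q)f}\,dxdt + \int_{\Sigma_{+,\omega}} u\,\overline{\partial_\nu f}\,d\sigma dt = \int_Q v\,\bar f\,dxdt - \int_\Omega v_0\,\overline{\partial_t f_{\vert t=0}}\,dx - \int_{\Sigma_{-,\omega}} v_-\,\overline{\partial_\nu f}\,d\sigma dt.
\end{equation*}
This motivates defining a linear functional $L$ on the subspace $\mathcal{M}\subset L_\lambda(Q)\times L_{\lambda,\lambda\gamma,+}$ by sending $((\partial_t^2-\Delta_x+q)f,\partial_\nu f_{\vert\Sigma_{+,\omega}})$ to the right-hand side of this identity.

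The well-definedness of $L$ on $\mathcal{M}$ follows from \eqref{caca}: if two elements of $\mathcal{D}$ produce the same image, their difference $g\in\mathcal{D}$ satisfies $(\partial_t^2-\Delta_x+q)g=0$ and $\partial_\nu g_{\vert\Sigma_{+,\omega}}=0$, so \eqref{caca} forces $g\equiv 0$, $\partial_t g_{\vert t=0}=0$ and $\partial_\nu g_{\vert\Sigma_{-,\omega}}=0$, whence the three terms of the right-hand side vanish on $g$. Continuity is a three-term Cauchy--Schwarz in the natural weighted $L^2$ dualities combined with \eqref{caca}, giving
\begin{equation*}
|L(F_1,F_2)|\leq C\bigl(\lambda^{-1}\|v\|_{-\lambda}+\lambda^{-1/2}\|v_0\|_{-\lambda,0}+\lambda^{-1/2}\|v_-\|_{-\lambda,\gamma^{-1},-}\bigr)\bigl(\|F_1\|_\lambda+\|F_2\|_{\lambda,\lambda\gamma,+}\bigr).
\end{equation*}

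I would then extend $L$ to the whole space $L_\lambda(Q)\times L_{\lambda,\lambda\gamma,+}$ by Hahn--Banach with the same operator norm, and apply Riesz in each factor to produce $u\in L_{-\lambda}(Q)$ and some $w\in L_{-\lambda,(\lambda\gamma)^{-1},+}$ so that the extended functional equals $(F_1,F_2)\mapsto \int_Q u\bar F_1\,dxdt + \int_{\Sigma_{+,\omega}} w\bar F_2\,d\sigma dt$. The norm bound above transfers to $\|u\|_{-\lambda}$, giving claim 3). Taking $f\in\mathcal{C}^\infty_c(Q)\subset\mathcal{D}$ kills every boundary term and yields $(\partial_t^2-\Delta_x+q)u=v$ in $\mathcal{D}'(Q)$, i.e.\ claim 1); since then $\Box u=v-qu\in L^2(Q)$ we have $u\in H_\Box(Q)$, so the traces $u_{\vert\Sigma}$ and $u_{\vert t=0}$ are well defined via Proposition~\ref{trace}.

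Claim 2) would be read off by testing against two families of $f\in\mathcal{D}$: those in which $\partial_t f_{\vert t=0}$ is prescribed while $\partial_\nu f_{\vert\Sigma_{-,\omega}}=0$, which identifies $u_{\vert t=0}=v_0$; and those in which $\partial_\nu f_{\vert\Sigma_{-,\omega}}$ is prescribed while $\partial_t f_{\vert t=0}=0$, which identifies $u_{\vert\Sigma_{-,\omega}}=v_-$. The main technical obstacle I expect is the construction of enough such \emph{single-datum} test functions inside $\mathcal{D}$; this amounts to a backward-in-time solvability question for the adjoint wave equation with zero source and prescribed terminal or boundary Cauchy data, followed by a cut-off/approximation step to enforce membership in $\mathcal{D}$. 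Everything else is the standard Bukhgeim--Uhlmann Carleman-duality scheme.
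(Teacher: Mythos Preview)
Your argument is essentially the paper's own proof: define the functional on $\mathcal M$, bound it via the Carleman estimate \eqref{caca}, extend by Hahn--Banach, represent by $(u,w)\in L_{-\lambda}(Q)\times L_{-\lambda,(\lambda\gamma)^{-1},+}$, and read off 1), 2), 3) in that order. The only difference is that you overcomplicate the identification of the traces in step 2). No backward solvability for the adjoint equation is needed: once 1) is known, Green's formula (in the weak sense underlying Proposition~\ref{trace}) rewrites $\langle(\Box+q)f,u\rangle$ in terms of $\langle f,v\rangle$ plus pairings of $\tau_{0,1}u$, $\tau_{0,2}u$ against $\partial_\nu f_{\vert\Sigma}$, $\partial_t f_{\vert t=0}$; then it suffices to produce, for each piece of boundary data separately, enough test functions in $\mathcal D$. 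For $u_{\vert t=0}$ take $f(t,x)=\chi(t)\phi(x)$ with $\phi\in\mathcal C^\infty_0(\Omega)$ and $\chi\in\mathcal C^\infty([0,T])$ vanishing to second order at $T$ with $\chi(0)=0$, $\chi'(0)=1$; for $u_{\vert\Sigma_{-,\omega}}$ take $f=\chi(t)R(0,g)$ with $g\in\mathcal C^\infty_0((0,T)\times\partial\Omega_{-,\omega})$ and $R$ the lifting operator from the proof of Proposition~\ref{trace}, and $\chi$ vanishing to second order at $0$ and $T$ while equal to $1$ on the $t$-support of $g$. The paper simply writes ``allowing $f\in\mathcal D$ to be arbitrary shows\ldots'', which amounts to exactly this.
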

\begin{proof} In view of \eqref{caca}, we can define the linear function $S$ on $\mathcal M$ by
\[S[((\Box+q) f,\partial_\nu f_{\vert\Sigma_{+,\omega}})]=\left\langle f,v \right\rangle_{L^2(Q)}-\left\langle \partial_\nu f,v_- \right\rangle_{L^2(\Sigma_{-,\omega})}+\left\langle \partial_tf_{\vert t=0},v_0 \right\rangle_{L^2(\Omega)},\quad f\in\mathcal D.\]
Then, using \eqref{caca}, for all $f\in\mathcal D$, we obtain
 \[\begin{array}{l}\abs{S[((\Box+q) f,\partial_\nu f_{\vert\Sigma_{+,\omega}})]}\\
\leq \norm{f}_{\lambda}\norm{v}_{-\lambda}+\norm{\partial_\nu f}_{\lambda,\gamma,-}\norm{v_-}_{-\lambda,\gamma^{-1},-}+\norm{\partial_tf_{\vert t=0}}_{\lambda,0}\norm{v_0}_{-\lambda,0}\\
\leq \lambda^{-1}\norm{v}_{-\lambda}\left(\lambda\norm{f}_{\lambda}\right)+\lambda^{-\frac{1}{2}}\norm{v_-}_{-\lambda,\gamma^{-1},-}\left(\lambda^{\frac{1}{2}}\norm{\partial_\nu f}_{\lambda,\gamma,-}\right)+ \lambda^{-\frac{1}{2}}\norm{v_0}_{-\lambda,0}\left(\lambda^{\frac{1}{2}}\norm{\partial_tf_{\vert t=0}}_{\lambda,0}\right)\\
\leq C\left( \lambda^{-1}\norm{v}_{-\lambda}+\lambda^{-\frac{1}{2}}\norm{v_-}_{-\lambda,\gamma^{-1},-}+\lambda^{-\frac{1}{2}}\norm{v_0}_{-\lambda,0}\right)\left( \norm{(\Box+q) f}_{\lambda}+\norm{\partial_\nu f}_{\lambda,\lambda\gamma,+}\right)\\
\leq 2C\left( \lambda^{-1}\norm{v}_{-\lambda}+\lambda^{-\frac{1}{2}}\norm{v_-}_{-\lambda,\gamma^{-1},-}+\lambda^{-\frac{1}{2}}\norm{v_0}_{-\lambda,0}\right) \norm{((\Box+q) f,\partial_\nu f_{\vert\Sigma_{+,\omega}})}_{L_\lambda(Q)\times L_{\lambda,\lambda\gamma,+}}\end{array}\]
with $C$ the constant of \eqref{caca}.  Applying the Hahn Banach theorem we deduce that $S$ can be extended to a continuous  linear form, also denoted by $S$, on 
$L_\lambda(Q)\times L_{\lambda,\lambda\gamma,+}$ satisfying
\bel{ppp1a}\norm{S}\leq C\left( \lambda^{-1}\norm{v}_{-\lambda}+\lambda^{-\frac{1}{2}}\norm{v_-}_{-\lambda,\gamma^{-1},-}+\lambda^{-\frac{1}{2}}\norm{v_0}_{-\lambda,0}\right).\ee
Thus, there exists
\[(u,u_+)\in L_{-\lambda}(Q)\times L_{-\lambda,(\lambda\gamma)^{-1},+}\]
such that for all $f\in\mathcal D$ we have
\[S[((\Box+q) f,\partial_\nu f_{\vert\Sigma_{+,\omega}})]=\left\langle (\Box+q) f,u \right\rangle_{L^2(Q)}-\left\langle\partial_\nu f,u_+ \right\rangle_{L^2(\Sigma_{+,\omega})}.\]
Therefore, for all $f\in\mathcal D$ we have
\bel{ppp1b}\begin{array}{l} \left\langle (\Box+q) f,u \right\rangle_{L^2(Q)}-\left\langle\partial_\nu f,u_+ \right\rangle_{L^2(\Sigma_{+,\omega})}\\
=\left\langle f,v \right\rangle_{L^2(Q)}-\left\langle \partial_\nu f,v_- \right\rangle_{L^2(\Sigma_{-,\omega})}+\left\langle \partial_tf_{\vert t=0},v_0 \right\rangle_{L^2(\Omega)}.\end{array}\ee

Note first that, since $L_{\pm \lambda}(Q)$ embedded continuously into $L^2(Q)$, we have $u\in L^2(Q)$.
Therefore, taking $f\in\mathcal C^\infty_0(Q)$ shows 1). For condition 2), using the fact that $L_{\pm \lambda}(Q)$ embedded continuously into $L^2(Q)$ we deduce that $u\in H_\Box(Q)$. Thus, we can define the trace $u_{\vert\Sigma}$, $u_{\vert t=0}$ and  allowing $f\in\mathcal D$ to be arbitrary shows that $u_{\vert\Sigma_{-,\omega}} =v_-$, $u_{\vert t=0}=v_0$ and $u_{\vert\Sigma_{+,\omega}}=-u_+$. Finally, condition 3) follows from the fact that  
\[\norm{u}_{-\lambda}\leq \norm{S}\leq C\left( \lambda^{-1}\norm{v}_{-\lambda}+\lambda^{-\frac{1}{2}}\norm{v_-}_{-\lambda,\gamma^{-1},-}+\lambda^{-\frac{1}{2}}\norm{v_0}_{-\lambda,0}\right).\]\end{proof}

Armed with this lemma we are now in position to prove Theorem \ref{tt1}. 
\subsection{Proof of Theorem \ref{tt1}}

Note first that $z$ must satisfy
\begin{equation}
\label{w1}
\left\{
\begin{array}{l}z\in L^2(Q) \\
(\partial_t^2-\Delta_x+q) (e^{\lambda(t+\omega\cdot x)}z)=-qe^{\lambda(t+\omega\cdot x)}\ \ \textrm{in }Q
\\
z(0,x)=-1, \quad   x\in\Omega,
\\
z=-1\quad \textrm{on }\Sigma_{+,\epsilon/2,-\omega}.
\end{array}
\right.\end{equation}
Let $\psi\in\mathcal C^\infty_0(\R^n)$ be such that   supp$\psi\cap\partial\Omega\subset \{x\in\partial\Omega:\ \omega\cdot\nu(x)<-\epsilon/3\}$ and $\psi=1$ on $\{x\in\partial\Omega:\ \omega\cdot\nu(x)<-\epsilon/2\}=\partial\Omega_{+,\epsilon/2,-\omega}$. Choose $v_-(t,x)=-e^{\lambda(t+\omega\cdot x)}\psi(x)$, $(t,x)\in\Sigma_{-,\omega}$.  Since $v_-(t,x)=0$ for $t\in(0,T)$, $x\in \{x\in\partial\Omega:\ \omega\cdot\nu(x)\geq-\epsilon/3\}$  we have
$v_-\in L_{-\lambda,\gamma^{-1},-}$. Fix also $v(t,x)=-qe^{\lambda(t+\omega\cdot x)}$ and $v_0(x)=-e^{\lambda\omega\cdot x}$, $(t,x)\in Q$. From Lemma \ref{ppp1}, we deduce that there exists $w\in H_\Box(Q)$ such that
\[
\left\{
\begin{array}{ll}
(\partial_t^2-\Delta_x+q) w=v(t,x)=-qe^{\lambda(t+\omega\cdot x)}&  \mbox{in}\; Q,
\\
w(0,x)=v_0(x)=-e^{\lambda x\cdot\omega}, &  x\in\Omega,
\\
w(t,x)=v_-(t,x)=-e^{\lambda(t+\omega\cdot x)}\psi(x),& (t,x)\in\Sigma_{-,\omega}.
\end{array}
\right.\]
Then, for $z=e^{-\lambda(t+\omega\cdot x)} w$ condition \eqref{w1} will be fulfilled. Moreover,  condition 3) of Lemma \ref{ppp1} implies
\[\begin{aligned}\norm{z}_{L^2(Q)}=\norm{w}_{-\lambda}&\leq C\left(\lambda^{-1}\norm{v}_{-\lambda}+\lambda^{-\frac{1}{2}}\norm{v_-}_{-\lambda,\gamma^{-1},-}+\lambda^{-\frac{1}{2}}\norm{v_0}_{-\lambda,0}\right)\\
\ &\leq \left(\lambda^{-1}\norm{q}_{L^2(Q)}+\lambda^{-\frac{1}{2}}\norm{\psi\gamma^{-1/2}}_{L^2(\Sigma_{-,\omega})}+\lambda^{-\frac{1}{2}}\norm{1}_{L^2(\Omega)}\right)
\ &\leq C\lambda^{-{1\over2}}\end{aligned}\]
with $C$ depending only on $\Omega$, $T$ and $\norm{q}_{L^\infty(Q)}$. Therefore,  estimate \eqref{CGO1b} holds.  Using the fact that $e^{\lambda(t+\omega\cdot x)} z=w\in H_{\Box}(Q)$, we deduce that  $u$ defined by \eqref{CGO1a} is lying in $H_{\Box}(Q)$ and is a solution of \eqref{(5.1)}. This completes the proof of Theorem \ref{tt1}.

\section{Uniqueness result}
This section is devoted to the proof of Theorem \ref{thm1}.  From now on we set $q=q_2-q_1$ on $Q$ and  we assume  that $q=0$ on $\R^{1+n}\setminus Q$. Without lost of generality we assume that  for all $\omega\in\{y\in\mathbb S^{n-1}:|y-\omega_0|\leq\epsilon\} $ we have $\partial\Omega_{-,\epsilon,\omega}\subset G'$ with $\epsilon>0$ introduced in the beginning of the previous section. 
Let   $\lambda >\max(\lambda_1,\lambda_0) $ and fix $\omega\in\{y\in\mathbb S^{n-1}:|y-\omega_0|\leq\epsilon\} $. According to Proposition \ref{p2}, we can introduce
\[u_1(t,x)=e^{-\lambda(t+\omega\cdot x)}\left(e^{-i\xi\cdot(t,x)}+w(t,x) \right) ,\ (t,x) \in Q,\]
where $u_1\in H^1(Q)$ satisfies $\partial_t^2u_1-\Delta_xu_1+q_1u_1=0$, $\xi\cdot(1,-\omega)=0$ and  $w$ satisfies \eqref{p2a}. Moreover, in view of Theorem \ref{tt1}, we consider $u_2\in H_{\Box}(Q)$ a solution of \eqref{(5.1)} with $q=q_2$ of the form 
\[u_2(t,x)=e^{\lambda(t+\omega\cdot x)}\left(1+z(t,x) \right),\quad (t,x)\in Q\]
with $z$ satisfying \eqref{CGO1b}, such that supp$\tau_{0,1}u_{2}\subset F$ and $\tau_{0,2}u_2=0$ (we recall that $\tau_{0,j}$, $j=1,2$, are the extensions on $H_{\Box}(Q)$ of the operators defined   by $\tau_{0,1}v=v_{|\Sigma}$ and $\tau_{0,2}v=v_{|t=0}$, $v\in\mathcal C^\infty(\overline{Q})$) .
In view of Proposition \ref{p6}, there exists a unique solution $w_1\in H_\Box (Q)$ of
 \bel{eq3}
\left\{
\begin{array}{ll}
 \partial_t^2w_1-\Delta_xw_1 +q_1w_1=0 &\mbox{in}\ Q,
\\

\tau_{0}w_1=\tau_{0}u_2. &

\end{array}
\right.
\ee
Then, $u=w_1-u_2$ solves
  \bel{eq4}
\left\{\begin{array}{ll}
 \partial_t^2u-\Delta_xu +q_1u=(q_2-q_1)u_2 &\mbox{in}\ Q,
\\
u(0,x)=\partial_tu(0,x)=0 & \mathrm{on}\  \Omega,\\

u=0 &\mbox{on}\ \Sigma
\end{array}\right.
\ee
and since $(q_2-q_1)u_2\in L^2(Q)$, in view of  \cite[Theorem A.2]{BCY} (see also   \cite[Theorem 2.1]{LLT} for $q=0$), we deduce that $u\in \mathcal C^1([0,T];L^2(\Omega))\cap \mathcal C([0,T];H^1_0(\Omega))\cap H_{\Box}(Q)\subset H^1(Q)\cap H_{\Box}(Q)$ with $\partial_\nu u\in L^2(\Sigma)$. Using the fact that $u_1\in H^1(Q)\cap H_\Box(Q)$, we deduce that $(\partial_tu_1,-\nabla_xu_1)\in H_{\textrm{div}}(Q)=\{F\in L^2(Q;\mathbb C^{n+1}):\ \textrm{div}_{(t,x)}F\in L^2(Q)\}$. Therefore, in view of \cite[Lemma 2.2]{Ka}, we can apply the Green formula to get
\[\int_Q u(\Box u_1)dtdx =-\int_Q(\partial_t u\partial_t u_1-\nabla_x u\cdot\nabla_xu_1)dtdx+\left\langle(\partial_tu_1,-\nabla_xu_1)\cdot \textbf{n}, u\right\rangle_{H^{-{1\over2}}(\partial Q),H^{{1\over2}}(\partial Q)}\]
with $\textbf{n}$ the outward unit normal vector to $Q$. In the same way, we find
\[\int_Q u_1(\Box u)dtdx =-\int_Q(\partial_t u\partial_t u_1-\nabla_x u\cdot\nabla_xu_1)dtdx+\left\langle(\partial_tu,-\nabla_xu)\cdot \textbf{n}, u_1\right\rangle_{H^{-{1\over2}}(\partial Q),H^{{1\over2}}(\partial Q)}.\]
From these two formulas we deduce that
\[\begin{aligned}\int_Q(q_2-q_1)u_2u_1dtdx&=\int_Q u_1(\Box u+q_1 u)dtdx-\int_Q u(\Box u_1+q_1u_1)dtdx\\
&=\left\langle(\partial_tu,-\nabla_xu)\cdot \textbf{n}, u_1\right\rangle_{H^{-{1\over2}}(\partial Q),H^{{1\over2}}(\partial Q)}-\left\langle(\partial_tu_1,-\nabla_xu_1)\cdot \textbf{n}, u\right\rangle_{H^{-{1\over2}}(\partial Q),H^{{1\over2}}(\partial Q)}.\end{aligned}\]
On the other hand we have $u_{|t=0}=\partial_tu_{|t=0}=u_{|\Sigma}=0$ and condition \eqref{thm1a} implies that $u_{|t=T}=\partial_\nu u_{|G}=0$. Combining this with the fact that $u\in \mathcal C^1([0,T];L^2(\Omega))$ and $\partial_\nu u\in L^2(\Sigma)$, we obtain
\begin{equation}\label{t3a} \int_Qqu_2u_1dtdx=-\int_{\Sigma\setminus G}\partial_\nu uu_1d\sigma(x)dt+\int_\Omega \partial_tu(T,x)u_1(T,x)dx.\end{equation}
Applying  the Cauchy-Schwarz inequality to the first expression on the right hand side of this formula, we get
\[\begin{aligned}\abs{\int_{\Sigma\setminus G}\partial_\nu uu_1d\sigma(x)dt}&\leq\int_{{\Sigma}_{+,\epsilon,\omega}}\abs{\partial_\nu ue^{-\lambda(t+\omega\cdot x)}(1+w)}dt d\sigma(x) \\
 \ &\leq C\left(\int_{{\Sigma}_{+,\epsilon,\omega}}\abs{e^{-\lambda(t+\omega\cdot x)}\partial_\nu u}^2d\sigma(x)dt\right)^{\frac{1}{2}}\end{aligned}\]
for some $C$ independent of $\lambda$. Here we have used both \eqref{p2a} and the fact that $(\Sigma\setminus G)\subset {\Sigma}_{+,\epsilon,\omega}$. In the same way, we have
\[\begin{aligned}\abs{\int_\Omega \partial_tu(T,x)u_1(T,x)dx}&\leq\int_{\Omega}\abs{\partial_t u(T,x)e^{-\lambda(T+\omega\cdot x)}(1+w(T,x))}dx \\
 \ &\leq C\left(\int_{\Omega}\abs{e^{-\lambda(T+\omega\cdot x)}\partial_t u(T,x)}^2dx\right)^{\frac{1}{2}}.\end{aligned}\]
Combining these estimates with the Carleman estimate \eqref{c1a}, the fact that $u_{|t=T}=\partial_\nu u_{|\Sigma_{-,\omega}}=0$, ${\partial\Omega}_{+,\epsilon,\omega}\subset {\partial\Omega}_{+,\omega}$, we find
\[\begin{array}{l}\abs{\int_Q(q_2-q_1)u_2u_1dtdx}^2\\

\leq 2C\left(\int_{{\Sigma}_{+,\epsilon,\omega}}\abs{e^{-\lambda(t+\omega\cdot x)}\partial_\nu u}^2d\sigma(x)dt+\int_\Omega \abs{e^{-\lambda(T+\omega\cdot x)}\partial_tu(T,x)}^2dx\right)\\
\ \\

\leq 2\epsilon^{-1}C\left(\int_{{\Sigma}_{+,\omega}}\abs{e^{-\lambda(t+\omega\cdot x)}\partial_\nu u}^2\omega\cdot \nu(x)d\sigma(x)dt+\int_\Omega \abs{e^{-\lambda(T+\omega\cdot x)}\partial_tu(T,x)}^2dx\right)\\
\ \\
\leq {\epsilon^{-1}C\over\lambda}\left(\int_Q\abs{ e^{-\lambda(t+\omega\cdot x)}(\partial_t^2-\Delta_x +q_1)u}^2dxdt\right)\\
\ \\
\leq {\epsilon^{-1}C\over\lambda}\left(\int_Q\abs{ e^{-\lambda(t+\omega\cdot x)}qu_2}^2dxdt\right)={\epsilon^{-1}C\over\lambda}\left(\int_Q\abs{ q}^2(1+\abs{z})^2dxdt\right).\end{array}\]
Here $C>0$ stands for some generic constant independent of $\lambda$.
It follows that
\begin{equation}\label{t3cc}\limsup_{\lambda\to+\infty}\int_Qqu_2u_1dtdx=0.\end{equation}
On the other hand, we have
\[\int_{Q}qu_1u_2d xdt=\int_{\R^{1+n}}q(t,x)e^{-i\xi\cdot(t,x)}dxdt+ \int_{Q}Z(t,x) dxdt\]
with $ Z(t,x)=q(t,x)(z(t,x)e^{-i\xi\cdot(t,x)}+w(t,x)+z(t,x)w(t,x))$. Then, in view of \eqref{p2a}  and \eqref{CGO1b}, an application of the Cauchy-Schwarz inequality yields
\[\abs{\int_{Q}Z(t,x)  dxdt}\leq C\lambda^{-\frac{1}{2}}\]
with $C$ independent of $\lambda$. Combining this with \eqref{t3cc}, we deduce that for all $\omega\in\{y\in\mathbb S^{n-1}:|y-\omega_0|\leq\epsilon\} $ and  all $\xi\in\R^{1+n}$ orthogonal to $(1,-\omega)$, the Fourier transform $\mathcal F(q)$ of $q$ satisfies
\[\mathcal F(q)(\xi)=(2\pi)^{-{n+1\over2}}\int_{\R^{1+n}}q(t,x)e^{-i\xi\cdot(t,x)}dxdt=0.\]
On the other hand, since $q\in L^\infty(Q)$ is compactly supported,  $\mathcal F(q)$ is analytic and it follows that $q=0$ and $q_1=q_2$. This completes the proof of Theorem \ref{thm1}.

\section*{Appendix}
In this appendix we prove  that the space $\mathcal C^\infty(\overline{Q})$ is dense in $H_\Box(Q)$ in some appropriate sense and we show that the  maps $\tau_0$ and $\tau_1$ can be extended continuously on these spaces. Without lost of generality we consider only these spaces for real valued functions. The results of this section are well known, nevertheless we prove them for sake of completeness.

\subsection*{Density result in $H_\Box(Q)$}

Let us first recall the definition of $K_\Box(Q)$:
\[K_\Box(Q)=\{u\in H^{-1}(0,T;L^2(\Omega)):\ \Box u=(\partial_t^2-\Delta_x) u\in L^2(Q)\}\]
with the norm
\[\norm{u}_{K_\Box(Q)}^2=\norm{u}_{H^{-1}(0,T;L^2(\Omega))}^2+\norm{\Box u}_{L^2(Q)}^2.\]
The goal of this subsection is to prove the following.

\begin{Thm}\label{density} $H_\Box(Q)$ embedded continuously into the closure of $\mathcal C^\infty(\overline{Q})$ with respect to $K_\Box(Q)$.\end{Thm}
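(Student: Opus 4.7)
The continuous embedding $H_\Box(Q)\hookrightarrow K_\Box(Q)$ is immediate from the continuous injection $L^2(0,T;L^2(\Omega))\hookrightarrow H^{-1}(0,T;L^2(\Omega))$ together with the fact that the $\Box u$ term contributes the same $L^2(Q)$ norm to both spaces. The substantive claim is density: given $u\in H_\Box(Q)$, one must produce a sequence $u_m\in\mathcal{C}^\infty(\overline Q)$ converging to $u$ in $K_\Box(Q)$. My plan is to adapt the argument of \cite[Theorem 6.4, Chapter 2]{LM1}, combining localization near the boundary with a shift-and-mollify procedure.

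First, fix a finite open cover $\{V_j\}_{j=0}^N$ of $\overline Q$ such that $V_0$ is compactly contained in $Q$ and, for $j\geq 1$, each $V_j$ is so small that a $C^2$ diffeomorphism $\Phi_j$ straightens the portion of $\partial Q$ it meets into a hyperplane, mapping $V_j\cap Q$ onto a piece of a half-space $\{y_0>0\}$ (the distinguished coordinate $y_0$ being time near an initial or final hyperplane and the inward spatial normal near $\Sigma$). Choose a subordinate partition of unity $\{\chi_j\}$ and decompose $u=\sum_j\chi_j u$. Since $\Box(\chi_j u)=\chi_j\Box u+[\Box,\chi_j]u$ with the commutator being a first-order operator applied to $u\in L^2(Q)$, each $\chi_j u$ still lies in $H_\Box(Q)$, so it suffices to approximate each piece separately. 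The interior piece $\chi_0 u$ has compact support in $Q$ and is approximated, for sufficiently small $\epsilon$, by the standard mollifier $\rho_\epsilon\ast(\chi_0 u)\in\mathcal{C}_0^\infty(Q)\subset\mathcal{C}^\infty(\overline Q)$; both $\rho_\epsilon\ast(\chi_0 u)\to\chi_0 u$ and $\rho_\epsilon\ast\Box(\chi_0 u)\to\Box(\chi_0 u)$ hold in $L^2(Q)$.

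For each boundary piece $\chi_j u$ with $j\geq 1$, pull back via $\Phi_j$ to obtain $v\in L^2$ of the half-space $\{y_0>0\}$ (with support meeting $\{y_0=0\}$) together with $\widetilde\Box v\in L^2$, where $\widetilde\Box=(\Phi_j^{-1})^*\Box\,\Phi_j^*$ is a second-order operator with $C^1$ coefficients. Translate $v$ by $\delta e_0$ for small $\delta>0$ so that its support is pushed into $\{y_0>\delta\}$, then convolve with a mollifier $\rho_\epsilon$ of radius $\epsilon<\delta/2$; the result is smooth on the closed half-space, and its pullback through $\Phi_j$ belongs to $\mathcal{C}^\infty(\overline Q)$. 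Continuity of translations on $L^2$ together with standard mollifier estimates yield convergence to $v$ in $L^2$, hence (after pulling back) convergence to $\chi_j u$ in $L^2(Q)$, and a fortiori in $H^{-1}(0,T;L^2(\Omega))$.

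The step I expect to be the main obstacle is showing that $\widetilde\Box$ applied to the shifted-mollified approximation converges to $\widetilde\Box v$ in $L^2$. Writing $v_{\epsilon,\delta}=\rho_\epsilon\ast\tau_\delta v$, one decomposes $\widetilde\Box v_{\epsilon,\delta}=\rho_\epsilon\ast\tau_\delta(\widetilde\Box v)+R_{\epsilon,\delta}v$, and the issue is that, because the principal coefficients of $\widetilde\Box$ are only $C^1$ rather than constant, the remainder $R_{\epsilon,\delta}v$ is formally a second-order expression in $v$ with small coefficients arising from the variation of those $C^1$ coefficients over a translation step of size $\delta$ and a mollification scale $\epsilon$. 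A Friedrichs-type commutator argument---integrating by parts to shift one derivative from $v$ onto the smoothing kernel or onto the coefficients---reduces $R_{\epsilon,\delta}v$ to first-order operators acting on $v$, whose $L^2$ norm vanishes as $\epsilon,\delta\to 0$; the weaker $H^{-1}(0,T;L^2(\Omega))$ norm used in $K_\Box(Q)$ provides exactly the flexibility needed to absorb the residual lower-order contributions that remain after the integration by parts. Summing the approximations over $j$ then produces the required sequence in $\mathcal{C}^\infty(\overline Q)$ converging to $u$ in $K_\Box(Q)$.
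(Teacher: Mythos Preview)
Your approach has a genuine gap at the localization step. You assert that since $[\Box,\chi_j]$ is first order and $u\in L^2(Q)$, each $\chi_j u$ remains in $H_\Box(Q)$; but a first-order operator sends $L^2$ to $H^{-1}$, not to $L^2$. Concretely,
\[
[\Box,\chi_j]u=2(\partial_t\chi_j)\,\partial_t u-2\nabla_x\chi_j\cdot\nabla_x u+(\Box\chi_j)u,
\]
and the first two terms are only distributions of order $-1$ when $u$ is merely $L^2$. Unlike the elliptic case, the constraint $u,\Box u\in L^2$ gives no hidden $H^1$ regularity (e.g.\ $u(t,x)=f(t+x_1)$ with $f\in L^2\setminus H^1$ satisfies $\Box u=0$). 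So the partition of unity does not preserve $H_\Box(Q)$, and you cannot reduce to approximating each piece separately inside that space.

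The Friedrichs step has the same issue one level up. After straightening the boundary, $\widetilde\Box$ has variable principal coefficients; with only $v\in L^2$ and $\widetilde\Box v\in L^2$, the commutator $[\widetilde\Box,\rho_\epsilon\ast]v$ tends to zero in $H^{-1}$, not in $L^2$: a single integration by parts recovers one derivative, not two. Your closing remark that ``the weaker $H^{-1}(0,T;L^2(\Omega))$ norm used in $K_\Box(Q)$ provides exactly the flexibility needed'' conflates the two parts of the $K_\Box$ norm: it is $u$ that is measured in $H^{-1}(0,T;L^2(\Omega))$, whereas $\Box u$ must still converge in $L^2(Q)$, and that is precisely where the remainder lives.

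For comparison, the paper proceeds by duality rather than by direct construction. One takes a continuous linear functional $N$ on $K_\Box(Q)$ vanishing on $\mathcal C^\infty(\overline Q)$, represents it via Hahn--Banach by a pair $(h_1,h_2)\in H^1_0(0,T;L^2(\Omega))\times L^2(Q)$, and observes that the vanishing condition forces $\Box\tilde h_2=-\tilde h_1$ on a larger cylinder (tildes denoting extension by zero). Regularity for the wave IBVP with $H^1_0$-in-time source then yields $\tilde h_2\in H^2$, hence $h_2\in H^2_0(Q)$; two honest integrations by parts against any $u\in H_\Box(Q)$ give $N(u)=0$. This route avoids both the localization and the commutator difficulties entirely.
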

\begin{proof} Let $N$ be a continuous linear form on $K_\Box(Q)$ satisfying
\bel{d1}Nf=0,\quad f\in \mathcal C^\infty(\overline{Q}).\ee
In order to show the required density result we will prove that this condition implies that\\
 $N_{\vert H_\Box(Q)}=0$.

 By considering the application $u\mapsto (u,\Box u)$ we can identify $K_\Box(Q)$ to a subspace of $H^{-1}(0,T;L^2(\Omega))\times L^2(Q)$. Then, applying the Hahn Banach theorem we deduce that $N$ can be extended to a continuous linear form on $H^{-1}(0,T;L^2(\Omega))\times L^2(Q)$. Therefore, there exist $h_1\in H^1_0(0,T;L^2(\Omega))$, $h_2\in L^2(Q)$ such that
\[N(u)=\left\langle u,h_1\right\rangle_{H^{-1}(0,T;L^2(\Omega)),H^1_0(0,T;L^2(\Omega))}+\left\langle \Box u,h_2\right\rangle_{L^2(Q)},\quad u\in K_\Box(Q).\]
Now let $\mathcal O\subset\R^n$ be a bounded $\mathcal C^\infty$ domain  such that $\overline{\Omega}\subset\mathcal O $ and fix $Q_\epsilon=(-\epsilon,T+\epsilon)\times\mathcal O$ with $\epsilon>0$. Let $\tilde{h}_j$ be the extension of $h_j$ on $\R^{1+n}$ by $0$ outside of $Q$ for $j=1,2$. In view of \eqref{d1} we have
\[\left\langle f,\tilde{h}_1\right\rangle_{L^2(Q_\epsilon)}+\left\langle (\partial_t^2-\Delta_x) f,\tilde{h}_2\right\rangle_{L^2(Q_\epsilon)}=N(f_{\vert Q})=0,\quad f\in\mathcal C^\infty_0(Q_\epsilon).\]
Thus, in the sense of distribution we have 
\[\Box \tilde{h}_2=-\tilde{h}_1\quad \textrm{on }Q_\epsilon.\]
Moreover, since $\tilde{h}_2=0$ on $\R^{1+n}\setminus \overline{Q}\supset \partial Q_\epsilon$, we deduce that $\tilde{h}_2$ solves
\[
\left\{ \begin{array}{rcll} \partial_t^2\tilde{h}_2-\Delta_x\tilde{h}_2& = & -\tilde{h}_1 & \textrm{in }  Q_\epsilon ,\\ 
\tilde{h}_2(-\epsilon,x)=\partial_t\tilde{h}_2(-\epsilon,x)&=&0,&x\in \mathcal O,\\
\tilde{h}_2(t,x)& = & 0,& (t,x)\in (-\epsilon,T+\epsilon)\times\partial\mathcal O.\end{array}\right.
\]
But, since $h_1\in H^1_0(0,T;L^2(\Omega))$, we have $\tilde{h}_1\in H^1_0(-\epsilon,T+\epsilon;L^2(\mathcal O))$ and we deduce from  \cite[Theorem 2.1, Chapter 5]{LM2}
 that this IBVP admits a unique solution lying in $H^2(Q_\epsilon)$. Therefore, $\tilde{h}_2\in H^2(Q_\epsilon)$. Combining this with the fact that $\tilde{h}_2=0$ on $Q_\epsilon\setminus Q$, we deduce that $h_2\in  H_0^2(Q)$, with $H_0^2(Q)$ the closure of $\mathcal C^\infty_0(Q)$ in $H^2(Q)$, and that $\Box h_2=-h_1$ on $Q$. Thus, for every $u\in H_\Box(Q)$ we have
\[\left\langle \Box u,h_2\right\rangle_{L^2(Q)}=\left\langle \Box u,h_2\right\rangle_{H^{-2}(Q),H_0^2(Q)}=\left\langle u,\Box h_2\right\rangle_{L^2(Q)}=-\left\langle  u,h_1\right\rangle_{L^2(Q)}.\]
Here we use the fact that $H_\Box(Q)\subset L^2(Q)$. Then, it follows that 
\[N(u)=\left\langle u,h_1\right\rangle_{L^2(Q)}-\left\langle u,h_1\right\rangle_{L^2(Q)}=0,\quad u\in H_\Box(Q).\]
From this last result we deduce that $H_\Box(Q)$ is contained into the closure of $\mathcal C^\infty(\overline{Q})$ with respect to $K_\Box(Q)$. Combining this with the fact that
$H_\Box(Q)$ embedded continuously into $K_\Box(Q)$, we deduce the required result.
\end{proof}

\subsection*{Trace operator in $H_\Box(Q)$}
In this subsection we extend the trace maps $\tau_0$ and $\tau_1$ into $H_\Box(Q)$ by duality in the following way.
\begin{prop}\label{trace}The maps
\[\tau_0w=(\tau_{0,1}w,\tau_{0,2}w,\tau_{0,3}w)=(w_{\vert\Sigma},w_{\vert t=0}, \partial_tw_{\vert t=0}), \quad w\in \mathcal C^\infty(\overline{Q}),\]
\[\tau_1w=(\tau_{1,1}w,\tau_{1,2}w,\tau_{1,3}w)=(\partial_\nu w_{\vert\Sigma},w_{\vert t=T}, \partial_tw_{\vert t=T}), \quad w\in \mathcal C^\infty(\overline{Q}),\]
can be extended continuously to $\tau_0:H_\Box(Q)\rightarrow H^{-3}(0,T; H^{-\frac{1}{2}}(\partial\Omega))\times H^{-2}(\Omega)\times H^{-4}(\Omega)$,\\ $\tau_1:H_\Box(Q)\rightarrow H^{-3}(0,T; H^{-\frac{3}{2}}(\partial\Omega))\times H^{-2}(\Omega)\times H^{-4}(\Omega)$.\end{prop}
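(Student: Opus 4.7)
The key input is Theorem~\ref{density}, by which $\mathcal C^\infty(\overline Q)$ is dense in $H_\Box(Q)$ for the $K_\Box(Q)$-topology and $H_\Box(Q)\hookrightarrow K_\Box(Q)$ continuously. It therefore suffices to prove, for every $w\in\mathcal C^\infty(\overline Q)$, the a priori bounds
$$\|\tau_0 w\|_{H^{-3}(0,T;H^{-1/2}(\partial\Omega))\times H^{-2}(\Omega)\times H^{-4}(\Omega)}+\|\tau_1 w\|_{H^{-3}(0,T;H^{-3/2}(\partial\Omega))\times H^{-2}(\Omega)\times H^{-4}(\Omega)}\leq C\|w\|_{K_\Box(Q)},$$
the required extensions then being obtained by continuity and density. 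The six components of $(\tau_0 w,\tau_1 w)$ will be treated separately by duality, against a test function $\phi$ carrying only one prescribed boundary/initial/terminal datum at a time.

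The mechanism is Green's identity: for $w,\phi\in\mathcal C^\infty(\overline Q)$,
$$\int_Q\bigl[(\Box w)\phi-w(\Box\phi)\bigr]dt\,dx=\int_\Sigma\bigl[w\,\partial_\nu\phi-\partial_\nu w\,\phi\bigr]d\sigma\,dt+\int_\Omega\bigl[\partial_t w\,\phi-w\,\partial_t\phi\bigr]_{t=0}^{t=T}dx.$$
Given a test datum $\Phi$ belonging to the predual of one component of $\tau_0$ or $\tau_1$, I would construct a lifting $\phi_\Phi\in\mathcal C^\infty(\overline Q)$ whose boundary traces carry $\Phi$ in that slot and vanish in all the other five slots. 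Then Green's identity displays the trace pairing $\langle\tau_{i,j}w,\Phi\rangle$ as
$$\langle\Box w,\phi_\Phi\rangle_{L^2(Q)}-\langle w,\Box\phi_\Phi\rangle_{H^{-1}(0,T;L^2(\Omega)),H_0^1(0,T;L^2(\Omega))},$$
which is bounded by $\|w\|_{K_\Box(Q)}\bigl(\|\phi_\Phi\|_{L^2(Q)}+\|\Box\phi_\Phi\|_{H_0^1(0,T;L^2(\Omega))}\bigr)$. The exponents of the six target spaces are then dictated by the regularity of $\Phi$ for which such a lifting $\phi_\Phi$ can be produced with a uniform bound on this pair of norms.

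For the Cauchy traces at $t=0$ or $t=T$ I would use a separated-variables ansatz $\phi_\Phi(t,x)=\chi(t)g(x)$: one prescribes $\chi$ and $\chi'$ at the carrying endpoint to isolate either the Dirichlet or the Neumann initial slot, and requires $\chi,\chi',\chi''$ to vanish at the opposite endpoint as well as $\chi''$ at the carrying endpoint, in order that $\Box\phi_\Phi=\chi''g-\chi\,\Delta g$ belong to $H_0^1(0,T;L^2(\Omega))$; controlling the resulting $\Delta g$-term in the $L^2$-norm forces $g\in H_0^2(\Omega)$ for $\tau_{0,2},\tau_{1,2}$ and, because $\partial_t(\Box\phi_\Phi)$ involves $\Delta g$ as well, $g\in H_0^4(\Omega)$ for $\tau_{0,3},\tau_{1,3}$. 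For the lateral traces, a standard extension of a function in $H_0^3(0,T;H^{1/2}(\partial\Omega))$ (resp.\ $H_0^3(0,T;H^{3/2}(\partial\Omega))$) into a tubular neighborhood of $\Sigma$, combined with a normal-direction cut-off and a temporal cut-off supported away from $\{t=0\}\cup\{t=T\}$, provides $\phi_\Phi$ with the desired Neumann (resp.\ Dirichlet) lateral trace and all other traces zero; the asymmetry $H^{-1/2}$ vs.\ $H^{-3/2}$ reflects the usual drop in boundary regularity between Dirichlet and Neumann traces. The one delicate point is to organize the bookkeeping so that each lifting's norm in $L^2(Q)\cap\{\Box\,\cdot\,\in H_0^1(0,T;L^2(\Omega))\}$ is dominated by the chosen Sobolev norm of $\Phi$; once this is done for each of the six slots, summing and applying the density theorem yields the continuous extensions claimed.
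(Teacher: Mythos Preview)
Your overall strategy---duality via Green's identity, lifting each test datum so that exactly one boundary/initial/terminal slot is nonzero, then invoking the density of Theorem~\ref{density}---is precisely the paper's approach, and it works cleanly for $\tau_{0,1}$, $\tau_{1,1}$, $\tau_{0,2}$, $\tau_{1,2}$.

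There is, however, a genuine gap in your treatment of $\tau_{0,3}$ (and by symmetry $\tau_{1,3}$). With the pure separated-variables ansatz $\phi_\Phi(t,x)=\chi(t)g(x)$ and the prescriptions $\chi(0)=1$, $\chi'(0)=0$, $\chi''(0)=0$ that you list, one has
\[
\Box\phi_\Phi(0,x)=\chi''(0)\,g(x)-\chi(0)\,\Delta g(x)=-\Delta g(x),
\]
which does \emph{not} vanish. Hence $\Box\phi_\Phi\notin H_0^1(0,T;L^2(\Omega))$, and the pairing $\langle w,\Box\phi_\Phi\rangle_{H^{-1},H_0^1}$ is undefined. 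Your stated reason for needing $g\in H_0^4(\Omega)$ (``because $\partial_t(\Box\phi_\Phi)$ involves $\Delta g$'') does not address this obstruction; the problem is the nonvanishing trace of $\Box\phi_\Phi$ at $t=0$, not a question of interior integrability.

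The paper's remedy is to abandon pure separation of variables and add a corrector: for $\phi\in H_0^4(\Omega)$ one takes
\[
\Phi(t,x)=\psi(t)\phi(x)+\tfrac{1}{2}\psi(t)\,t^2\,\Delta_x\phi(x),
\]
with $\psi$ a smooth cutoff equal to $1$ near $t=0$ and vanishing near $t=T$. The second term is engineered so that its contribution to $\partial_t^2\Phi$ at $t=0$ equals $+\Delta_x\phi$, cancelling the $-\Delta_x\phi$ coming from $-\Delta_x(\psi\phi)$; thus $\Box\Phi|_{t=0}=0$ and $\Box\Phi\in H_0^1(0,T;L^2(\Omega))$. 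Because $\Box\Phi$ now contains $\Delta_x^2\phi$, one genuinely needs $\phi\in H_0^4(\Omega)$---this is the correct origin of the $H^{-4}(\Omega)$ target space. With this single modification your plan goes through and coincides with the paper's proof.
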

\begin{proof}  It is well known that the trace maps $$u\mapsto (u_{\vert\partial\Omega}, \partial_\nu u_{\vert\partial\Omega})$$ can be extended continuously to a bounded operator from $H^2(\Omega)$ to $H^{\frac{3}{2}}(\partial\Omega)\times H^{\frac{1}{2}}(\partial\Omega)$ which is onto. Therefore, there exists a bounded operator $R:H^{\frac{3}{2}}(\partial\Omega)\times H^{\frac{1}{2}}(\partial\Omega)\rightarrow H^2(\Omega)$ such that 
\[R(h_1,h_2)_{\vert\partial\Omega}=h_1,\quad \partial_\nu R(h_1,h_2)_{\vert\partial\Omega}=h_2,\quad (h_1,h_2)\in H^{\frac{3}{2}}(\partial\Omega)\times H^{\frac{1}{2}}(\partial\Omega).\]
Fix $g\in H^3_0(0,T;H^{\frac{1}{2}}(\partial\Omega))$ and choose $G(t,.)=R(0,g(t,.))$. One can check that $G\in H^3_0(0,T;H^2(\Omega))$ and
\bel{tr1}\norm{G}_{H^3(0,T;H^2(\Omega))}\leq \norm{R}\norm{g}_{H^3(0,T;H^{\frac{1}{2}}(\partial\Omega))}.\ee
Applying twice the Green formula we obtain
 \[\int_{\Sigma} vgd\sigma(x)dt=\int_Q \Box v Gdxdt- \int_Q  v\Box Gdxdt,\quad v\in \mathcal C^\infty(\overline{Q}).\]
But $\Box G\in H^1_0(0,T;H^2(\Omega))$, and we have
\[\left\langle  \tau_{0,1}v,g\right\rangle_{H^{-3}(0,T;H^{-\frac{1}{2}}(\partial\Omega)),H^3_0(0,T;H^{\frac{1}{2}}(\partial\Omega))}=\left\langle \Box v,G\right\rangle_{L^2(Q)}-\left\langle v,\Box G\right\rangle_{H^{-1}(0,T;L^2(\Omega)),H^1_0(0,T;L^2(\Omega))}.\]
Then, using \eqref{tr1} and the Cauchy Schwarz inequality, for all $v\in \mathcal C^\infty(\overline{Q})$, we obtain
 \[\begin{aligned}\abs{\left\langle  \tau_{0,1}v,g\right\rangle}&\leq \norm{\Box v}_{L^2(Q)}\norm{G}_{L^2(Q)}+\norm{v}_{H^{-1}(0,T;L^2(\Omega))}\norm{\Box G}_{H^1_0(0,T;L^2(\Omega))}\\
\ &\leq C\norm{v}_{K_\Box(Q)}\norm{g}_{H^3(0,T;H^{\frac{1}{2}}(\partial\Omega))}\end{aligned}\]
which, combined with the density result of Theorem \ref{density}, implies that $\tau_{0,1}:\ v\mapsto v_{\vert\Sigma}$ extend continuously to a bounded operator from $H_\Box(Q)$ to 
$H^{-3}(0,T;H^{-\frac{1}{2}}(\partial\Omega))$. In a same way we prove that 
\[\tau_{1,1}v=\partial_\nu v_{\vert\Sigma},\quad v\in \mathcal C^\infty(\overline{Q})\]
extend continuously to a bounded operator from $H_\Box(Q)$ to 
$H^{-3}(0,T;H^{-\frac{3}{2}}(\partial\Omega))$. 

Now let us consider the operators $\tau_{i,j}$, $i=0,1$, $j=2,3$. We start with
\[\tau_{0,2}:v\longmapsto v_{\vert t=0},\quad v\in\mathcal C^\infty(\overline{Q}).\]
Let $h\in H^2_0(\Omega)$ and fix $H(t,x)=t\psi(t)h(x)$ with
$\psi\in\mathcal C^\infty_0(-T,\frac{T}{2})$ satisfying $0\leq \psi\leq 1$ and $\psi=1$ on $[-\frac{T}{3},\frac{T}{3}]$. Then, using the fact that $\psi=1$ on a neighborhood of $t=0$, we deduce that
\[H_{\vert\Sigma}=\partial_\nu H_{\vert\Sigma}= H_{\vert t=0}=\Box H_{\vert t=0}=\Box H_{\vert t=T}=0,\quad \partial_tH_{\vert t=0}=h.\]
Therefore, $\Box H\in H^1_0(0,T; L^2(\Omega))$ and repeating the above arguments, for all $v\in\mathcal C^\infty(\overline{Q})$, we obtain the representation 
 \[\left\langle  \tau_{0,2}v,h\right\rangle_{H^{-2}(\Omega),H^2_0(\Omega)}=\left\langle v,\Box H\right\rangle_{H^{-1}(0,T;L^2(\Omega)),H^1_0(0,T;L^2(\Omega))}-\left\langle H,\Box v\right\rangle_{L^2(Q)}.\]
Then, we prove by density that $\tau_{0,2}$ extends continuously to $\tau_{0,2}:H_\Box(Q)\longrightarrow H^{-2}(\Omega)$.

For
\[\tau_{0,3}:v\longmapsto \partial_t v_{\vert t=0},\quad v\in\mathcal C^\infty(\overline{Q}),\]
let $\phi\in H^4_0(\Omega)$ and fix
\[\Phi(t,x)=\psi(t)\phi(x)+\frac{\psi(t)t^2\Delta_x\phi(x)}{2}.\]
 Then, $\Phi$ satisfies
\[\Phi_{\vert\Sigma}=\partial_\nu \Phi_{\vert\Sigma}=\partial_t \Phi_{\vert t=0}=0,\quad \Phi_{\vert t=0}=\phi.\]
Moreover, we have $\Box \Phi\in H^1(0,T;L^2(\Omega))$ with
\[(\partial_t^2-\Delta_x) \Phi_{\vert t=0}=-\Delta_x\phi+\Delta_x\phi=0,\quad (\partial_t^2-\Delta_x) \Phi_{\vert t=T}=0\]
and it follows that $\Box \Phi\in H_0^1(0,T;L^2(\Omega))$. Therefore, repeating the above arguments we obtain the representation 
 \[\left\langle  \tau_{0,3}v,\phi\right\rangle_{H^{-4}(\Omega),H^4_0(\Omega)}=\left\langle \Box v,\Phi\right\rangle_{L^2(Q)}-\left\langle v,\Box \Phi\right\rangle_{H^{-1}(0,T;L^2(\Omega)),H^1_0(0,T;L^2(\Omega))}\]
and we deduce that $\tau_{0,3}$ extends continuously to $\tau_{0,3}:H_\Box(Q)\longrightarrow H^{-4}(\Omega)$.
In a same way, one can check that 
\[\tau_{1,2}v=v_{\vert t=T},\ \tau_{1,3}v= \partial_t v_{\vert t=T},\quad v\in\mathcal C^\infty(\overline{Q})\]
 extend  continuously to $\tau_{1,2}:H_\Box(Q)\longrightarrow H^{-2}(\Omega)$ and $\tau_{1,3}:H_\Box(Q)\longrightarrow H^{-4}(\Omega)$.
\end{proof}

\end{document}